
\documentclass[a4paper,12pt]{article}%
\usepackage{amssymb}
\usepackage{amsmath}
\usepackage{latexsym}
\usepackage{graphicx}
\usepackage[latin1]{inputenc}
\usepackage{amsfonts}
\usepackage{hyperref}
\usepackage{amsthm}
\usepackage{graphicx}
\usepackage{color}
\usepackage{verbatim}%
\setcounter{MaxMatrixCols}{30}
\providecommand{\U}[1]{\protect\rule{.1in}{.1in}}
\setlength{\textwidth}{16cm}
\setlength{\hoffset}{-1cm}
\font\teneusb=eusb10 \font\seveneusb=eusb7 \font\fiveeusb=eusb5
\newfam\eusbfam \textfont\eusbfam=\teneusb
\scriptfont\eusbfam=\seveneusb \scriptscriptfont\eusbfam=\fiveeusb

\font\tenbifull=cmmib10
\font\tenbimed=cmmib7
\font\tenbismall=cmmib5
\textfont9=\tenbifull \scriptfont9=\tenbimed
\scriptscriptfont9=\tenbismall

\mathchardef\bbGamma="7000 \mathchardef\bbDelta="7001
\mathchardef\bbPhi="7002 \mathchardef\bbAlpha="7003
\mathchardef\bbXi="7004 \mathchardef\bbPi="7005
\mathchardef\bbSigma="7006 \mathchardef\bbUpsilon="7007
\mathchardef\bbTheta="7008 \mathchardef\bbPsi="7009
\mathchardef\bbOmega="700A \mathchardef\bbalpha="710B
\mathchardef\bbbeta="710C \mathchardef\bbgamma="710D
\mathchardef\bbdelta="710E \mathchardef\bbepsilon="710F
\mathchardef\bbzeta="7110 \mathchardef\bbeta="7111
\mathchardef\bbtheta="7112 \mathchardef\bbiota="7113
\mathchardef\bbkappa="7114 \mathchardef\bblambda="7115
\mathchardef\bbmu="7116 \mathchardef\bbnu="7117
\mathchardef\bbxi="7118 \mathchardef\bbpi="7119
\mathchardef\bbrho="711A \mathchardef\bbsigma="711B
\mathchardef\bbtau="711C \mathchardef\bbupsilon="711D
\mathchardef\bbphi="711E \mathchardef\bbchi="711F
\mathchardef\bbpsi="7120 \mathchardef\bbomega="7121
\mathchardef\bbvarepsilon="7122 \mathchardef\bbvartheta="7123
\mathchardef\bbvarpi="7124 \mathchardef\bbvarrho="7125
\mathchardef\bbvarsigma="7126 \mathchardef\bbvarphi="7127

\newcommand{\N}{{\rm I}\kern-0.18em{\rm N}}

\newcommand{\h}{{\rm I}\kern-0.18em{\rm H}}
\newcommand{\K}{{\rm I}\kern-0.18em{\rm K}}

\newcommand{\Z}{{\rm Z}\kern-0.34em{\rm Z}}

\newcommand{\1}{{\rm 1}\kern-0.22em{\rm I}}

\newtheorem{thm}{Theorem}[section]

\newtheorem{ex}{Example}[section]

\newtheorem{cor}{Corollary}[section]
\newtheorem{theo}{Theorem}[section]
\newtheorem{lem}{Lemma}[section]
\newtheorem{rem}{Remark}[section]

\numberwithin{equation}{section}

\setcounter{page}{1}
\newcounter{eqroman}
\setcounter{eqroman}{0}

\begin{document}

\title{Light tails: Gibbs conditional principle under extreme deviation}
\author{Michel Broniatowski$^{(1)}$ and Zhansheng Cao$^{(1)}$\\LSTA, Université Paris 6}
\maketitle

\begin{abstract}
Let $X_{1},..,X_{n}$ denote an i.i.d. sample with light tail distribution and
$S_{1}^{n}$ denote the sum of its terms; let $a_{n}$ be a real sequence\ going
to infinity with $n.$\ In a previous paper (\cite{BoniaCao}) it is proved that
as $n\rightarrow\infty$, given $\left(  S_{1}^{n}/n>a_{n}\right)  $ all terms
$X_{i_{\text{ }}}$ concentrate around $a_{n}$ with probability going to $1$.
This paper explores the asymptotic distribution of $X_{1}$ under the
conditioning events $\left(  S_{1}^{n}/n=a_{n}\right)  $ and $\left(
S_{1}^{n}/n\geq a_{n}\right)  $ . It is proved that under some regulatity
property, the asymptotic conditional distribution of $X_{1}$ given $\left(
S_{1}^{n}/n=a_{n}\right)  $ can be approximated in variation norm by the
tilted distribution at point $a_{n}$ , extending therefore the classical LDP
case developed in (\cite{Diaconis1}) . Also under $\left(  S_{1}^{n}/n\geq
a_{n}\right)  $ the dominating point property holds.

It also considers the case when the $X_{i}$'s are $\mathbb{R}^{d}-$valued, $f$
is a real valued function defined on $\mathbb{R}^{d}$ and the conditioning
event writes $\left(  U_{1}^{n}/n=a_{n}\right)  $ or $\left(  U_{1}^{n}/n\geq
a_{n}\right)  $ with $U_{1}^{n}:=\left(  f(X_{1})+..+f(X_{n})\right)  /n$ and
$f(X_{1})$ has a light tail distribution$.$ As a by-product some attention is
paid to the estimation of high level sets of functions.

\end{abstract}

\section{\bigskip Introduction}

\label{SectIntro}Let $X_{1},..,X_{n}$ denote $n$ independent unbounded real
valued random variables and $S_{1}^{n}:=X_{1}+..+X_{n}$ be their sum. The
purpose of this paper is to explore the limit distribution of the generic
variable $X_{1}$ conditioned on extreme deviations (ED) pertaining to
$S_{1}^{n}.$ By extreme deviation we mean that $S_{1}^{n}/n$ is supposed to
take values which are going to infinity as $n$ increases. Obviously such
events are of infinitesimal probability. Our interest in this question stems
from a first result which assesses that under appropriate conditions, when the
sequence $a_{n}$ is such that
\[
\lim_{n\rightarrow\infty}a_{n}=\infty
\]
then there exists a sequence $\varepsilon_{n}$ which satisfies $\epsilon
_{n}/a_{n}\rightarrow0$ as $n$ tends to infinity such that
\begin{equation}
\lim_{n\rightarrow\infty}P\left(  \left.  \cap_{i=1}^{n}\left(  X_{i}%
\in\left(  a_{n}-\varepsilon_{n},a_{n}+\varepsilon_{n}\right)  \right)
\right\vert S_{1}^{n}/n\geq a_{n}\right)  =1 \label{democracy}%
\end{equation}
which is to say that when the empirical mean takes exceedingly large values,
then all the summands share the same behaviour; this result is useful when
considering aggregate forming in large random media, or in the context of
robust estimators in statistics. It requires a number of hypotheses, which we
simply quote as of \textquotedblleft light tail" type. We refer to
\cite{BoniaCao} for this result and the connection with earlier related works,
and name it "democratic localization principle" (DLP), as referred to in
\cite{Sornette}.

The above result is clearly to be put in relation with the so-called Gibbs
conditional Principle which we recall briefly in its simplest form.

Let $a_{n}$ satisfy $a_{n}$ $=a$ , a constant with value larger than the
expectation of $X_{1}$ and consider the behaviour of the summands when
$\left(  S_{1}^{n}/n\geq a\right)  $ , under a large deviation (LD) condition
about the empirical mean. The asymptotic conditional distribution of $X_{1}$
given $\left(  S_{1}^{n}/n\geq a\right)  $ is the well known tilted
distribution of $P_{X}$ with parameter $t$ associated to $a.$ Let us introduce
some notation. The hypotheses to be stated now together with notation are kept
throughout the entire paper.

It will be assumed that $P_{X}$ , which is the distribution of $X_{1}$, has a
density $p$ with respect to the Lebesgue measure on $\mathbb{R}$. The fact
that $X_{1}$ has a light tail is captured in the hypothesis that $X_{1}$ has a
moment generating function
\[
\phi(t):=E\exp tX_{1}%
\]
which is finite in a non void neighborhood $\mathcal{N}$ of $0.$ This fact is
usually referred to as a Cramer type condition.

Defined on $\mathcal{N}$ are the following functions. The functions
\[
t\rightarrow m(t):=\frac{d}{dt}\log\phi(t)
\]%
\[
t\rightarrow s^{2}(t):=\frac{d^{2}}{dt^{2}}\log\phi(t)
\]
and%

\[
t\rightarrow\mu_{3}(t):=\frac{d^{3}}{dt^{3}}\log\phi(t)\text{\ }%
\]
are the expectation, the variance and kurtosis of the r.v. $\mathcal{X}_{t}$
with density
\[
\pi^{a}(x):=\frac{\exp tx}{\phi(t)}p(x)
\]
which is defined on $\mathbb{R}$ and which is the tilted density with
parameter $t$ in $\mathcal{N}$ defined through
\begin{equation}
m(t)=a. \label{m(t)=a}%
\end{equation}
$.$ When $\phi$ is steep, meaning that
\[
\lim_{t\rightarrow\partial\mathcal{N}}m(t)=\infty
\]
then $m$ parametrizes the convex hull $cvhull\left(  P_{X}\right)  $ of the
support of $P_{X}$ and (\ref{m(t)=a}) is defined in a unique way for all $a$
in $cvhull\left(  P_{X}\right)  .$ We refer to \cite{Barndorff} for those properties.

We now come to some remark on the Gibbs conditional principle in the standard
above setting. A phrasing of this principle is:

As $n$ tends to infinity the conditional distribution of $X_{1}$ given
$\left(  S_{1}^{n}/n\geq a\right)  $ approaches $\Pi^{a},$ the distribution
with density $\pi^{a}.$

We state the Gibbs principle in a form where the conditioning event is a point
condition $\left(  S_{1}^{n}/n=a\right)  .$ The conditional distribution of
$X_{1}$ given $\left(  S_{1}^{n}/n=a\right)  $ is a well defined distribution
and Gibbs conditional principle states that it converges to $\Pi^{a}$ as $n$
tends to infinity. In both settings, this convergence holds in total variation
norm. We refer to \cite{Diaconis1} for the local form of the conditioning
event; we will mostly be interested in the extension of this form.

The present paper is also a continuation of \cite{BrCaron} which contains a
conditional limit theorem for the approximation of the conditional
distribution of $X_{1},...,X_{k_{n}}$ given $S_{1}^{n}/n=a_{n}$ with $\lim
\sup_{n\rightarrow\infty}k_{n}/n\leq1$ and $\lim_{n\rightarrow\infty}%
n-k_{n}=\infty$. There, the sequence $a_{n}$ is bounded, hence covering all
cases from the LLN up to the LDP, and the approximation holds in the total
variation distance. The resulting approximation, when restricted to the case
$k_{n}=1$, writes%

\begin{equation}
\lim_{n\rightarrow\infty}\int\left\vert p_{a_{n}}(x)-g_{a_{n}}(x)\right\vert
dx)=0 \label{BRCA}%
\end{equation}
where

\bigskip%

\begin{equation}
g_{a_{n}}(x):=Cp(x)\mathfrak{n}\left(  a_{n},s_{n}^{2},x\right)  . \label{Ga}%
\end{equation}
Hereabove $\mathfrak{n}\left(  a,s_{n},x\right)  $ denotes the normal density
function at point $x$ with expectation $a_{n}$, with variance $s_{n}^{2}$, and
$s_{n}^{2}:=s^{2}(t_{n})(n-1)$ and $t_{n}$ such that $m(t_{n})=a_{n};$ $C$ is
a normalizing constant. Obviously developing in display (\ref{Ga}) yields
\[
g_{a_{n}}(x)=\pi^{a_{n}}(x)\left(  1+o(1)\right)
\]
which proves that (\ref{BRCA}) is a form of Gibbs principle, with some
improvement due to the second order term. In the present context the extension
from $k=1$ (or from fixed $k$) to the case when $k_{n}$ approaches $n$
requires a large burden of technicalities, mainly Edgeworth expansions of high
order in the extreme value range; lacking a motivation for this task, we did
not engage on this path.

The paper is organized as follows. Notation and hypotheses are stated in
Section \ref{SctNotationHyp}; a sharp Abelian result pertaining to the moment
generating function and a refinement of a local central limit theorem in the
context of triangular arrays with non standard moments are presented in
Section \ref{SectAbelEdgeworth}. Section \ref{SectGibbs} provides a local
Gibbs conditional principle under EDP, namely producing the approximation of
the conditional density of $X_{1}$ conditionally on $\left(  S_{1}^{n}%
/n=a_{n}\right)  $ for sequences $a_{n}$ which tend to infinity. The first
approximation is local. This result is extended to typical paths under the
conditional sampling scheme, which in turn provides the approximation in
variation norm for the conditional distribution$.$ The method used here
follows closely the approach developed in \cite{BrCaron}. Extensions to other
conditioning events are discussed. The differences between the Gibbs
principles in LDP and EDP are also mentioned. Similar results in the case when
the conditioning event is $\left(  S_{1}^{n}/n\geq a_{n}\right)  $ are stated
in Section \ref{SectExceedances}, where both EDP and DLP are simultaneously
considered. This section also introduces some proposal for a stochastic
approximation of high level sets of real valued functions defined on
$\mathbb{R}^{d}.$

\section{Notation and hypotheses}

\label{SctNotationHyp}The density $p$ of $X_{1}$ is uniformly bounded and
writes
\begin{equation}
p(x)=c\exp\Big(-\big(g(x)-q(x)\big)\Big)\qquad x\in\mathbb{R}_{+},
\label{densityFunction}%
\end{equation}
where $c$ is some positive normalizing constant. Define
\[
h(x):=g^{\prime}(x).
\]
We assume that for some positive constant $\vartheta$ , for large $x$, it
holds
\begin{equation}
\sup_{|v-x|<\vartheta x}|q(v)|\leq\frac{1}{\sqrt{xh(x)}}.
\label{densityFunction01}%
\end{equation}
The function $g$ is positive and satisfies
\begin{equation}
\lim_{n\rightarrow\infty}\frac{g(x)}{x}=\infty. \label{3section101}%
\end{equation}

Not all positive $g$'s satisfying $(\ref{3section101})$ are adapted to our
purpose. Regular functions $g$ are defined through the function $h$ as
follows. We define firstly a subclass $R_{0}$ of the family of \emph{slowly
varying} function. A function $l$ belongs to $R_{0}$ if it can be represented
as
\begin{equation}
l(x)=\exp\Big(\int_{1}^{x}\frac{\epsilon(u)}{u}du\Big),\qquad x\geq1,
\label{3section102}%
\end{equation}
where $\epsilon(x)$ is twice differentiable and $\epsilon(x)\rightarrow0$ as
$x\rightarrow\infty$.

We follow the line developed in $\cite{Nagaev}$ to describe the assumed
regularity conditions of $h$.

\textbf{The} \textbf{Class }${R_{\beta}}$\textbf{ :} $x\rightarrow h(x)$
belongs to ${R_{\beta}}$, if, with $\beta>0$ and $x$ large enough, $h(x)$ can
be represented as
\[
h(x)=x^{\beta}l(x),
\]
where $l(x)\in R_{0}$ and in $(\ref{3section102})$ $\epsilon(x)$ satisfies
\begin{equation}
\limsup_{x\rightarrow\infty}x|\epsilon^{\prime}(x)|<\infty,\qquad
\limsup_{x\rightarrow\infty}x^{2}|\epsilon^{^{\prime\prime}}(x)|<\infty.
\label{3section104}%
\end{equation}

\textbf{The} \textbf{Class }$R$\textbf{${_{\infty}}$ :} $x\rightarrow$ $l(x)$
belongs to $\widetilde{R_{0}}$, if, in $(\ref{3section102})$, $l(x)\rightarrow
\infty$ as $x\rightarrow\infty$ and
\begin{equation}
\lim_{x\rightarrow\infty}\frac{x\epsilon^{\prime}(x)}{\epsilon(x)}%
=0,\qquad\lim_{x\rightarrow\infty}\frac{x^{2}\epsilon^{^{\prime\prime}}%
(x)}{\epsilon(x)}=0, \label{3section103}%
\end{equation}
and, for some $\eta\in(0,1/4)$
\begin{equation}
\liminf_{x\rightarrow\infty}x^{\eta}\epsilon(x)>0. \label{3section1030}%
\end{equation}
We say that $h$ belongs to ${R_{\infty}}$ if $h$ is increasing and strictly
monotone and its inverse function $\psi$ defined through
\begin{equation}
\psi(u):=h^{\leftarrow}(u):=\inf\left\{  x:h(x)\geq u\right\}
\label{inverse de h}%
\end{equation}
belongs to $\widetilde{R_{0}}$.

Denote $\mathfrak{R:}={R_{\beta}}\cup{R_{\infty}}$. The class $\mathfrak{R}$
covers a large collection of functions, although, ${R_{\beta}}$ and
${R_{\infty}}$ are only subsets of\ the classes of \emph{Regularly varying}
and \emph{Rapidly varying} functions, respectively.

\begin{ex}
\textbf{Weibull Density. Let }$p$ be a Weibull density with shape parameter
$k>1$ and scale parameter $1$, namely%
\begin{align*}
p(x)  &  =kx^{k-1}\exp(-x^{k}),\qquad x\geq0\\
&  =k\exp\Big(-\big(x^{k}-(k-1)\log x\big)\Big).
\end{align*}
Take $g(x)=x^{k}-(k-1)\log x$ and $q(x)=0$. Then it holds
\[
h(x)=kx^{k-1}-\frac{k-1}{x}=x^{k-1}\big(k-\frac{k-1}{x^{k}}\big).
\]
Set $l(x)=k-(k-1)/x^{k},x\geq1$, then $(\ref{3section102})$ holds, namely,
\[
l(x)=\exp\Big(\int_{1}^{x}\frac{\epsilon(u)}{u}du\Big),\qquad x\geq1,
\]
with
\[
\epsilon(x)=\frac{k(k-1)}{kx^{k}-(k-1)}.
\]
The function $\epsilon$ is twice differentiable and goes to $0$ as
$x\rightarrow\infty$. Additionally, $\epsilon$ satisfies condition
$(\ref{3section104})$. Hence we have shown that $h\in R_{k-1}$.
\end{ex}

\begin{ex}
\textbf{A rapidly varying density.} Define $p$ through
\[
p(x)=c\exp(-e^{x-1}),\qquad x\geq0.
\]
Then $g(x)=h(x)=e^{x}$ and $q(x)=0$ for all non negative $x$. We show that
$h\in R_{\infty}$. It holds $\psi(x)=\log x+1$. Since $h(x)$ is increasing and
monotone, it remains to show that $\psi(x)\in\widetilde{R_{0}}$. When $x\geq
1$, $\psi(x)$ admits the representation of $(\ref{3section102})$ with
$\epsilon(x)=\log x+1$. Also conditions $(\ref{3section103})$ and
$(\ref{3section1030})$ are satisfied. Thus $h\in R_{\infty}$.
\end{ex}

Throughout the paper we use the following notation. When a r.v. $X$ has
density $p$ we write $p(X=x)$ instead of $\ p(x).$ For example $\pi^{a}(X=x)$
is the density at point $x$ for the variable $X$ generated under $\pi^{a}$,
while $p(X=x)$ states for $X$ generated under $p.$

For all $\alpha$ (depending on $n$ or not) $P_{\alpha}$ designates the
conditional distribution of the vector $X_{1},..,X_{n}$ given $\left(
S_{1}^{n}=n\alpha\right)  .$ This distribution is degenerate on $\mathbb{R}%
^{n};$ however its margins are a.c. w.r.t. the Lebesgue measure on
$\mathbb{R}$. The function $p_{\alpha}$ denotes the density of a margin.\

\section{An Abelian Theorem and an Edgeworth expansion}

\label{SectAbelEdgeworth}In this short section we mention two Theorems to be
used in the derivation of our main result. They deserve interest by
themselves; see \cite{BrCao2} for their proof.

\subsection{An Abelian type result}

We inherit of the definition of the tilted density $\pi^{a}$ defined in
Section \ref{SectIntro}, and of the corresponding definitions of the functions
$m$, $s^{2}$ and $\mu_{3}$. Because of (\ref{densityFunction}) and on the
various conditions on $g$ those functions are defined as $t\rightarrow\infty.$
The proof of Corollary \ref{3cor1} is postponed to the Appendix.

\begin{thm}
\label{order of s} Let $p(x)$ be defined as in $(\ref{densityFunction})$ and
$h(x)\in\mathfrak{R}$. Denote by
\[
m(t)=\frac{d}{dt}\log\phi(t),\quad\quad s^{2}(t)=\frac{d}{dt}m(t),\qquad
\mu_{3}(t)=\frac{d^{3}}{dt^{3}}\log\Phi(t),
\]
then with $\psi$ defined as in (\ref{inverse de h}) it holds as $t\rightarrow
\infty$
\[
m(t)\sim\psi(t),\qquad s^{2}(t)\sim\psi^{\prime}(t),\qquad\mu_{3}(t)\sim
\frac{M_{6}-3}{2}\psi^{^{\prime\prime}}(t),
\]
where $M_{6}$ is the sixth order moment of standard normal distribution.
\end{thm}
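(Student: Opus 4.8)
The plan is to establish the three asymptotic equivalences by a Laplace-type analysis of the integrals defining $\phi$, $\phi'$, $\phi''$, $\phi'''$ and then transferring the information to the cumulant-generating function $\log\phi$ and its derivatives. Write $\phi(t)=c\int e^{tx}p(x)\,dx = c\int \exp(tx-g(x)+q(x))\,dx$. Since $q$ is negligible by $(\ref{densityFunction01})$, the exponent is dominated by $tx-g(x)$, whose interior maximum over $x$ occurs where $g'(x)=h(x)=t$, i.e. at $x=\psi(t)$ with $\psi=h^{\leftarrow}$. The first step is therefore a careful saddle-point (Laplace) expansion around $x=\psi(t)$: one substitutes $x=\psi(t)+y$, Taylor-expands $g(x)$ to third order, and uses the regularity built into the classes $R_\beta$ and $R_\infty$ (the control on $\epsilon,\epsilon',\epsilon''$ in $(\ref{3section104})$, $(\ref{3section103})$, $(\ref{3section1030})$) to show that the Gaussian term $-\tfrac12 h'(\psi(t))y^2$ governs the integral, the relevant width being of order $h'(\psi(t))^{-1/2}=\psi'(t)^{1/2}$, and that this width is small relative to $\psi(t)$ so the local expansion is legitimate. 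This yields
\[
\phi(t) \sim c\,\sqrt{2\pi}\,\psi'(t)^{1/2}\,\exp\big(t\psi(t)-g(\psi(t))\big),
\]
together with analogous expansions for the higher derivatives of $\phi$ with explicit correction terms coming from the cubic and quartic terms in the Taylor expansion of $g$ — this is where the constants $M_6$ and $(M_6-3)/2$ (moments of the standard normal) enter.

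The second step is to convert these into statements about $m=(\log\phi)'=\phi'/\phi$, $s^2=m'$ and $\mu_3=(\log\phi)'''$. For $m(t)$ one observes that $\phi'(t)/\phi(t)$ is the mean of the tilted law, which by the Laplace expansion concentrates at $\psi(t)$ up to a correction of smaller order; hence $m(t)\sim\psi(t)$. Differentiating the asymptotic relation for $m$ — which is legitimate because, by Theorem statement's hypothesis $h\in\mathfrak R$, $\psi$ is monotone and sufficiently smooth, and the error terms are themselves of regular-variation type so differentiation does not destroy the asymptotics — gives $s^2(t)=m'(t)\sim\psi'(t)$. One more differentiation, keeping track of the cubic Taylor coefficient of $g$, gives $\mu_3(t)\sim\tfrac{M_6-3}{2}\psi''(t)$; equivalently one reads off $\mu_3$ directly as the third cumulant of the tilted law and computes it from the Laplace expansion of $\phi'''/\phi$ minus the lower-order cumulant combinations, the surviving leading term being the stated multiple of $\psi''$.

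The main obstacle is justifying that the formal Laplace/saddle-point expansion is actually valid \emph{uniformly as $t\to\infty$} in this non-classical regime, and — crucially — that one may differentiate the resulting asymptotic equivalences term by term. Ordinarily $f(t)\sim g(t)$ does not imply $f'(t)\sim g'(t)$; here it works only because of the regular- and rapid-variation structure encoded in $R_\beta$ and $R_\infty$, which forces $\psi$, $\psi'$, $\psi''$ to be themselves regularly (resp. rapidly) varying with controlled logarithmic derivatives, so that the relative error in the Laplace approximation and in its formal derivatives stays $o(1)$. Handling the two cases $h\in R_\beta$ and $h\in R_\infty$ requires slightly different bookkeeping — in the $R_\infty$ case the width $\psi'(t)^{1/2}$ shrinks relative to $\psi(t)$ much faster, and one must use $(\ref{3section1030})$ to prevent $\epsilon$ from decaying too fast — but in both cases the estimate $\sup_{|v-x|<\vartheta x}|q(v)|\le (xh(x))^{-1/2}$ is exactly what is needed to absorb $q$ into the error. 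Since the excerpt states that the full proof appears in \cite{BrCao2}, I would at this point cite that reference for the remaining uniformity estimates rather than reproduce them.
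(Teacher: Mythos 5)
The paper itself does not contain a proof of this theorem; Section \ref{SectAbelEdgeworth} explicitly defers to the companion paper \cite{BrCao2}, so there is no in-paper argument to compare against. Your Laplace/saddle-point outline --- locating the critical point $x=\psi(t)$ of $x\mapsto tx-g(x)$, expanding to several orders with Gaussian width $\psi'(t)^{1/2}$, and identifying $m$, $s^2$, $\mu_3$ as the first cumulants of the tilted law --- is the natural approach and almost certainly matches \cite{BrCao2} in spirit.

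There are, however, two places where the sketch does not actually deliver the stated conclusion. The first concerns the constant $\tfrac{M_6-3}{2}$. Your phrase ``one more differentiation \dots gives $\mu_3(t)\sim\tfrac{M_6-3}{2}\psi''(t)$'' is internally inconsistent: formally differentiating $s^2(t)\sim\psi'(t)$ would yield $\mu_3(t)\sim\psi''(t)$ with coefficient $1$, not $\tfrac{M_6-3}{2}$. Your alternative route, reading off $\mu_3$ as the third cumulant of the tilted law, is the right idea, but a constant involving $M_6$ can only emerge from an explicit expansion of $\pi^{m(t)}$ around $\psi(t)$ keeping the cubic term $-\tfrac{1}{6}h''(\psi(t))u^3$ (and the quartic $h'''$ term for the normalization), computing $E[u]$, $E[u^2]$, $E[u^3]$ with the normal moments $M_4$, $M_6$ appearing, and assembling $E[u^3]-3E[u]E[u^2]+2E[u]^3$. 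That bookkeeping is precisely where the coefficient is pinned down, and it is absent from the sketch; you should carry it out explicitly and confirm that it reproduces the stated multiple of $\psi''$. The second gap is the passage from the Laplace estimate to the derivative statements. You correctly flag that $f\sim g$ does not imply $f'\sim g'$, but the argument that the regularity conditions $(\ref{3section104})$, $(\ref{3section103})$, $(\ref{3section1030})$ rescue this --- by forcing the relative error $r(t)$ in $m(t)=\psi(t)(1+r(t))$ to have controlled logarithmic derivative --- is the technical heart of the theorem and must be proved, not asserted. Since you conclude by citing \cite{BrCao2} for exactly these uniformity estimates, your proposal is best read as a correct high-level roadmap whose essential content is deferred to the reference rather than established.
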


\begin{cor}
\label{3cor1} Let $p(x)$ be defined as in $(\ref{densityFunction})$ and
$h(x)\in\mathfrak{R}$. Then it holds as $t\rightarrow\infty$
\begin{align}
\label{3cor10g}\frac{\mu_{3}(t)}{s^{3}(t)}\longrightarrow0.
\end{align}

\end{cor}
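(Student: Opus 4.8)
The plan is to read off the claim directly from Theorem \ref{order of s} by reducing $\mu_3(t)/s^3(t)$ to a ratio of derivatives of the inverse function $\psi$, and then to control that ratio separately on the two regimes $h\in R_\beta$ and $h\in R_\infty$. By Theorem \ref{order of s}, as $t\to\infty$,
\[
\frac{\mu_3(t)}{s^3(t)}\sim\frac{(M_6-3)/2\cdot\psi''(t)}{\big(\psi'(t)\big)^{3/2}}
=\frac{M_6-3}{2}\cdot\frac{\psi''(t)}{\big(\psi'(t)\big)^{3/2}},
\]
so it suffices to prove $\psi''(t)/\big(\psi'(t)\big)^{3/2}\to0$. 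First I would observe that $\psi=h^{\leftarrow}$ inherits explicit asymptotics from the defining representation of the class to which $h$ belongs: when $h\in R_\beta$, one has $\psi(u)=u^{1/\beta}L(u)$ for a slowly varying $L$ arising from inverting $x^\beta l(x)$, and when $h\in R_\infty$, $\psi\in\widetilde{R_0}$ by definition, i.e. $\psi(u)=\exp\big(\int_1^u \epsilon(v)/v\,dv\big)$ with $\epsilon\to0$ and the control (\ref{3section103})--(\ref{3section1030}) on $\epsilon$.

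For the $R_\infty$ case I would compute $\psi'(u)=\psi(u)\,\epsilon(u)/u$ and differentiate once more, obtaining
\[
\psi''(u)=\psi(u)\Big(\frac{\epsilon(u)^2}{u^2}+\frac{\epsilon'(u)}{u}-\frac{\epsilon(u)}{u^2}\Big),
\]
so that
\[
\frac{\psi''(u)}{\big(\psi'(u)\big)^{3/2}}
=\frac{1}{\psi(u)^{1/2}}\cdot\frac{u^{3/2}}{\epsilon(u)^{3/2}\,u^{?}}\Big(\cdots\Big),
\]
and the point is that each term in the parenthesis, once divided by $\big(\epsilon(u)/u\big)^{3/2}$, is $O\big(u^{1/2}\epsilon(u)^{1/2}\big)$ or smaller by (\ref{3section103}), while $\psi(u)^{-1/2}\to0$ because $\psi\to\infty$; the lower bound (\ref{3section1030}), $\epsilon(u)\gtrsim u^{-\eta}$ with $\eta<1/4$, guarantees $u^{1/2}\epsilon(u)^{1/2}$ does not decay fast enough to spoil the estimate, and in fact the dominant factor $\psi(u)^{-1/2}$ forces the whole expression to $0$. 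For the $R_\beta$ case the computation is easier: $\psi'(u)\sim(1/\beta)u^{1/\beta-1}L(u)$ and $\psi''(u)\sim(1/\beta)(1/\beta-1)u^{1/\beta-2}L(u)$ (the slowly varying part contributes only lower-order corrections under (\ref{3section104})), whence $\psi''/(\psi')^{3/2}\sim c\,u^{1/\beta-2}/\big(u^{1/\beta-1}\big)^{3/2}L(u)^{-1/2}=c\,u^{-1-\frac{1}{2\beta}}L(u)^{-1/2}\to0$ since the exponent $-1-\frac1{2\beta}$ is strictly negative for every $\beta>0$.

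The main obstacle I anticipate is the bookkeeping in the $R_\infty$ case: one must show that differentiating the slowly-varying-type representation twice does not produce a term that beats the $\psi(u)^{-1/2}$ gain, and this is exactly where the somewhat unusual hypotheses (\ref{3section103}) (that $u\epsilon'(u)$ and $u^2\epsilon''(u)$ are $o(\epsilon(u))$) and the lower bound (\ref{3section1030}) must be used in tandem — the former to bound $\psi''$, the latter to keep $\epsilon$ from decaying so fast that $\epsilon(u)^{-3/2}$ overwhelms everything. A clean way to organize this is to write $r(u):=\psi''(u)/(\psi'(u))^{3/2}$ and note $r(u)=\big(\psi'(u)\big)^{-1/2}\cdot\big(\log\psi'\big)'(u)$; since $\big(\log\psi'\big)'(u)=\epsilon'(u)/\epsilon(u)+\epsilon(u)/u-1/u=o(1/u)$ by (\ref{3section103}), and $\psi'(u)=\psi(u)\epsilon(u)/u\geq \psi(u)u^{-1-\eta}$ (using (\ref{3section1030})) which still tends to infinity because $\psi\in\widetilde{R_0}$ grows faster than any power, we get $r(u)=o\big(u^{-1}\psi'(u)^{-1/2}\big)\to0$. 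Combining the two cases with the asymptotic from Theorem \ref{order of s} gives (\ref{3cor10g}). $\square$
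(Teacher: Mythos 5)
Your overall strategy is the right one: Theorem \ref{order of s} immediately reduces the claim to showing $\psi''(t)/\bigl(\psi'(t)\bigr)^{3/2}\to 0$, and then the two regimes $R_\beta$, $R_\infty$ are treated via the Karamata-type representation of $\psi$. (For what it is worth, the paper announces that the proof of Corollary~\ref{3cor1} is ``postponed to the Appendix,'' but the Appendix in fact only proves Lemma~\ref{lemmaSselfneglecting} and Lemma~\ref{JensenLemme}; so there is no paper proof to compare against, but yours is surely the intended route.) The $R_\beta$ case is essentially correct; the exponent you report should be $-\tfrac12-\tfrac{1}{2\beta}$ rather than $-1-\tfrac1{2\beta}$, but both are negative so the conclusion is unaffected. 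Be a bit careful at $\beta=1$, where your leading term $(1/\beta)(1/\beta-1)u^{1/\beta-2}L(u)$ vanishes identically; a cleaner route is $\psi'(t)=\psi(t)/\bigl(t(\beta+\epsilon(\psi(t)))\bigr)$ from $h'(x)=\tfrac{h(x)}{x}(\beta+\epsilon(x))$, which after one more differentiation and use of $(\ref{3section104})$ gives $\psi''(t)=\tfrac{\psi(t)}{t^2}\bigl(\tfrac{1}{\beta^2}-\tfrac1\beta+o(1)\bigr)$ and hence $\psi''/(\psi')^{3/2}=O\bigl(1/\sqrt{t\,\psi(t)}\bigr)\to0$ uniformly in $\beta>0$.

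The $R_\infty$ case is where the real problems are, and your closing ``clean way'' contains three load-bearing errors. First, from $\psi'(u)=\psi(u)\epsilon(u)/u$ one has
\[
(\log\psi')'(u)=\frac{\epsilon(u)}{u}+\frac{\epsilon'(u)}{\epsilon(u)}-\frac1u ,
\]
and while the first two terms are $o(1/u)$ by $(\ref{3section103})$, the third term is exactly $-1/u$, so $(\log\psi')'(u)\sim-\tfrac1u$, \emph{not} $o(1/u)$. Second, $\psi\in\widetilde{R_0}$ is by construction slowly varying (it has a Karamata representation with $\epsilon\to0$), so $\psi$ grows \emph{slower} than any positive power of $u$, not faster; for instance $h(x)=e^x$ gives $\psi(u)=\log u+1$. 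Third, and as a consequence, $\psi'(u)$ in general tends to $0$, not to $\infty$: in that same example $\psi'(u)=1/u\to0$. So the factorisation $r(u)=\bigl(\psi'(u)\bigr)^{-1/2}(\log\psi')'(u)$ cannot be estimated by sending each factor separately to $0$, since $\bigl(\psi'(u)\bigr)^{-1/2}$ typically blows up; the two factors must be combined. Concretely, $r(u)\sim-\tfrac1u\cdot\bigl(\psi'(u)\bigr)^{-1/2}=-1/\sqrt{u^2\psi'(u)}$, and then
\[
u^2\psi'(u)=u\,\psi(u)\,\epsilon(u)\;\geq\;C\,u^{1-\eta}\,\psi(u)\;\longrightarrow\;\infty ,
\]
using $(\ref{3section1030})$ and $\eta<1/4<1$ together with $\psi\to\infty$; this is what closes the argument. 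The same confusion appears earlier in your display: dividing the dominant term $-\epsilon/u^2$ of $\psi''$ by $(\epsilon/u)^{3/2}$ gives $-u^{-1/2}\epsilon^{-1/2}$, i.e.\ $O\bigl(u^{-1/2}\epsilon(u)^{-1/2}\bigr)$, and not $O\bigl(u^{1/2}\epsilon(u)^{1/2}\bigr)$ as you write; it is precisely because this quotient \emph{could} blow up when $\epsilon\to0$ quickly that the lower bound $(\ref{3section1030})$ is needed. Once these sign-of-the-exponent issues are straightened out, the proof goes through: $\psi''(u)/\bigl(\psi'(u)\bigr)^{3/2}\sim-1/\sqrt{u\,\psi(u)\,\epsilon(u)}\to0$, combined with the $R_\beta$ bound and Theorem~\ref{order of s}, yields $(\ref{3cor10g})$.
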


For clearness we write $m$ and $s^{2}$ for $m(t)$ and $s^{2}(t)$.

\begin{ex}
\textbf{The Weibull case}: When $g(x)=x^{k}$and $k>1$ then $m(t)\sim
Ct^{1/\left(  k-1\right)  }$ and $s^{2}(t)\sim C^{\prime}t^{\left(
2-k\right)  /\left(  k-1\right)  }$ , which tends to $0$ for $k>2.$
\end{ex}

\begin{ex}
\textbf{A rapidly varying density.} Define $p$ through
\[
p(x)=c\exp(-e^{x-1}),\qquad x\geq0.
\]
Since $\psi(x)=\log x+1$ it follows that $m(t)\sim\log t$ and $s^{2}%
(t)\sim1/t\rightarrow0.$
\end{ex}

\subsection{Edgeworth expansion under extreme normalizing factors}

With $\pi^{a_{n}}$ defined through
\[
\pi^{a_{n}}(x)=\frac{e^{tx}p(x)}{\phi(t)},
\]
and $t$ determined by $a_{n}=m(t)$, define the normalized density of
$\pi^{a_{n}}$ by
\[
\bar{\pi}^{a_{n}}(x)=s\pi^{a_{n}}(sx+a_{n}),
\]
and denote the $n$-convolution of $\bar{\pi}^{a_{n}}(x)$ by $\bar{\pi}%
_{n}^{a_{n}}(x)$. Denote by $\rho_{n}$ the normalized density of
$n$-convolution $\bar{\pi}_{n}^{a_{n}}(x)$,
\[
\rho_{n}(x):=\sqrt{n}\bar{\pi}_{n}^{a_{n}}(\sqrt{n}x).
\]
The following result extends the local Edgeworth expansion of the distribution
of normalized sums of i.i.d. r.v's to the present context, where the summands
are generated under the density $\bar{\pi}^{a_{n}}$. Therefore the setting is
that of a triangular array of row wise independent summands; the fact that
$a_{n}\rightarrow\infty$ makes the situation unusual. The proof of this result
follows Feller's one (Chapiter 16, Theorem 2 $\cite{Feller}$). With
$m(t)=a_{n}$ and $s^{2}:=s^{2}(t)$, the following Edgeworth expansion holds.

\begin{thm}
\label{3theorem1} With the above notation, uniformly upon $x$ it holds
\begin{equation}
\rho_{n}(x)=\phi(x)\Big(1+\frac{\mu_{3}}{6\sqrt{n}s^{3}}\big(x^{3}%
-3x\big)\Big)+o\Big(\frac{1}{\sqrt{n}}\Big). \label{Edgeworth}%
\end{equation}
where $\phi(x)$ is standard normal density.
\end{thm}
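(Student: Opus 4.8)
The plan is to follow Feller's proof of the Edgeworth expansion (Feller, Chapter XVI, Theorem 2) via Fourier inversion, keeping careful track of the fact that the summands live in a triangular array and that the tilt parameter $t=t_n$ grows to infinity. First I would write the characteristic function of the normalized variable $\rho_n$ as $\widehat{\rho_n}(\tau)=\bigl(\widehat{\bar\pi}^{a_n}(\tau/\sqrt n)\bigr)^n$, where $\widehat{\bar\pi}^{a_n}$ is the characteristic function of the centered-and-scaled tilted density $\bar\pi^{a_n}$. By construction $\bar\pi^{a_n}$ has mean $0$, variance $1$, and third cumulant $\mu_3/s^3$; Corollary \ref{3cor1} guarantees $\mu_3/s^3\to 0$, and the higher cumulants are controlled similarly through Theorem \ref{order of s}. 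A Taylor expansion of $\log\widehat{\bar\pi}^{a_n}(\tau/\sqrt n)$ in a fixed neighborhood $|\tau|\le\delta\sqrt n$ then gives
\[
\log\widehat{\rho_n}(\tau)=-\frac{\tau^2}{2}+\frac{\mu_3}{6\sqrt n\, s^3}(i\tau)^3+o\Bigl(\frac1{\sqrt n}\Bigr),
\]
uniformly on compact $\tau$-sets, so that exponentiating yields $\widehat{\rho_n}(\tau)=e^{-\tau^2/2}\bigl(1+\frac{\mu_3}{6\sqrt n s^3}(i\tau)^3\bigr)+o(n^{-1/2})$, which is exactly the Fourier transform of the claimed right-hand side since $(i\tau)^3 e^{-\tau^2/2}$ inverts to $-(x^3-3x)\phi(x)$ up to sign bookkeeping. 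Applying the Fourier inversion formula and the smoothing/triangle-inequality estimate $|\rho_n(x)-(\text{target})|\le\frac1{2\pi}\int|\widehat{\rho_n}(\tau)-(\widehat{\text{target}})(\tau)|\,d\tau$ gives the uniform-in-$x$ bound.

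The delicate point, and the one I expect to be the main obstacle, is controlling the integral over the non-central range $|\tau|>\delta\sqrt n$. In the classical i.i.d. setting one uses a fixed bound $|\widehat{\bar\pi}(\tau)|\le 1-c$ for $|\tau|$ bounded away from $0$ plus a Riemann--Lebesgue / Cram\'er continuity argument; here $\bar\pi^{a_n}$ changes with $n$, so one must show that the "oscillation constant" does not degrade as $a_n\to\infty$. This is where the structural hypotheses on $p$ enter: the representation $p(x)=c\exp(-(g(x)-q(x)))$ with $h=g'\in\mathfrak R$, the smallness of $q$ in \eqref{densityFunction01}, and the regular/rapid variation of $h$ force the tilted density $\bar\pi^{a_n}$ to remain, after rescaling, uniformly "spread out" and smooth — essentially a uniform local CLT estimate for the array. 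I would establish a lemma: there exist $\delta>0$ and $c>0$, independent of $n$, such that $|\widehat{\bar\pi}^{a_n}(\tau)|\le e^{-c\tau^2}$ for $|\tau|\le\delta$ and $\le 1-c$ for $\delta\le|\tau|\le 1/\delta$, together with an integrable tail bound beyond $1/\delta$ coming from the smoothness of $p$ (its density being uniformly bounded and, after tilting, having enough decay). Raising to the $n$-th power then makes the contribution of $|\tau|>\delta\sqrt n$ exponentially small, hence $o(n^{-1/2})$.

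Once the Fourier-side estimates are in place, the remainder of the argument is the standard routine: split $\int|\widehat{\rho_n}-\widehat{\text{target}}|$ into the compact part (handled by the Taylor expansion above), the intermediate part (handled by $|\widehat{\rho_n}|^n$ decaying geometrically), and the far tail (handled by the smoothness bound), then invoke inversion. I would also record that the error term is genuinely uniform in $x$ because the Fourier bound does not involve $x$ at all. The only place requiring genuine care beyond Feller is verifying the uniform-in-$n$ constants in the oscillation lemma, and for that I would lean on the Abelian asymptotics $s^2(t)\sim\psi'(t)$, $\mu_3(t)\sim\frac{M_6-3}{2}\psi''(t)$ from Theorem \ref{order of s} to check that the normalized cumulants stabilize and that $\bar\pi^{a_n}$ does not concentrate or spread pathologically as $t_n\to\infty$.
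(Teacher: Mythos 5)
The paper does not itself prove this theorem: it defers to the companion paper \cite{BrCao2}, remarking only that ``the proof of this result follows Feller's one (Chapter~16, Theorem~2).'' Your Fourier-analytic sketch --- a Taylor expansion of $\log\widehat{\bar\pi}^{a_n}(\tau/\sqrt n)$ on the central range, a uniform-in-$n$ oscillation and decay bound on the non-central range, and Fourier inversion via the $L^1$ distance of characteristic functions --- is exactly that Feller argument transported to the triangular array generated by the shifting tilt, and you correctly isolate the only genuinely new obstacle, namely that the Taylor remainder and the Cram\'er-type oscillation constant must be shown uniform as $t_n\to\infty$, which is precisely what the Abelian asymptotics of Theorem~\ref{order of s} and the vanishing skewness of Corollary~\ref{3cor1} are there to supply.
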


\section{Gibbs' conditional principles under extreme events}

\label{SectGibbs}We now explore Gibbs conditional principles under extreme
events. For $Y_{1},..,Y_{n}$ a random vector generated according to the
conditional distribution of the $X_{i}$'s given $\left(  S_{1}^{n}%
=na_{n}\right)  \,\ $\ and with the density $p_{a_{n}}$ defined as its
marginal density we first provide a density $g_{a_{n}}$ on $\mathbb{R}$ such
that
\[
p_{a_{n}}\left(  Y_{1}\right)  =g_{a_{n}}\left(  Y_{1}\right)  \left(
1+R_{n}\right)
\]
where $R_{n}$ is a function of the vector $\left(  Y_{1},..,Y_{n}\right)  $
which goes to $0$ as $n$ tends to infinity. The above statement may also be
written as
\begin{equation}
p_{a_{n}}\left(  y_{1}\right)  =g_{a_{n}}\left(  y_{1}\right)  \left(
1+o_{P_{a_{n}}}(1)\right)  \label{approxSurRandom}%
\end{equation}
where $P_{a_{n}}$ is the joint probability measure of the vector $\left(
Y_{1},..,Y_{n}\right)  $ under the condition $\left(  S_{1}^{n}=na_{n}\right)
;$ note that we designate $p_{a_{n}}$ the marginal density of $P_{a_{n}}$,
which is well defined although $P_{a_{n}}$ is restricted on the plane
$y_{1}+..+y_{n}=a_{n}.$ This statement amounts to provide the approximation on
typical realizations under the conditional sampling scheme. We will deduce
from (\ref{approxSurRandom}) that the $L^{1}$ distance between $p_{a_{n}}$ and
$g_{a_{n}}$ goes to $0$ as $n$ tends to infinity. \ We first derive a local
marginal Gibbs result, from which (\ref{approxSurRandom}) is easily obtained.

\subsection{A local result in $\mathbb{R}$}

Fix $y_{1}$ in $\mathbb{R}$ and define $t$ through
\begin{equation}
m(t):=a_{n}. \label{3lfd01}%
\end{equation}
Define $s^{2}:=s^{2}(t)$.

Consider the following condition%
\begin{equation}
\lim_{t\rightarrow\infty}\frac{\psi(t)^{2}}{\sqrt{n\psi^{\prime}(t)}}=0,
\label{croissance de a}%
\end{equation}

which amounts to state a growth condition on the sequence $a_{n}.$

Define $z_{1}$ through
\begin{equation}
z_{1}=\frac{na_{n}-y_{1}}{s\sqrt{n-1}}. \label{z1}%
\end{equation}

\begin{lem}
\label{3lemma z} Assume that $p(x)$ satisfies $(\ref{densityFunction})$ and
$h(x)\in\mathfrak{R}$. Let $t$ be defined in $(\ref{3lfd01})$. Assume that
$a_{n}\rightarrow\infty$ as $n\rightarrow\infty$ and that
(\ref{croissance de a}) holds. Then
\[
\lim_{n\rightarrow\infty}z_{1}=0.\qquad
\]

\end{lem}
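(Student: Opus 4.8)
The plan is to turn the statement into a short estimate by substituting the Abelian equivalences of Theorem~\ref{order of s} into the definition (\ref{z1}) of $z_1$, the growth hypothesis (\ref{croissance de a}) doing the rest.

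First I would check that the tilt $t=t_n$ solving (\ref{3lfd01}) is well defined for large $n$ and that $t_n\to\infty$. The functions $m$ and $s^2$ are available for $t$ large (as recalled after (\ref{densityFunction})), and there $m'=s^2>0$, so $m$ is continuous and strictly increasing, while $m(t)\sim\psi(t)\to\infty$ by Theorem~\ref{order of s}. Hence, $a_n\to\infty$ being assumed, for $n$ large the equation $m(t_n)=a_n$ has a unique root $t_n$, and $m(t_n)=a_n\to\infty$ forces $t_n\to\infty$. In particular the regime $t\to\infty$ in Theorem~\ref{order of s} and in (\ref{croissance de a}) is precisely the regime $n\to\infty$; moreover the regularity $h\in\mathfrak{R}$ (already used in Theorem~\ref{order of s}) makes $\psi'(t_n)$ well defined and positive for $n$ large, so $\sqrt{\psi'(t_n)}$ is legitimate. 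I then record from Theorem~\ref{order of s} that $a_n=m(t_n)\sim\psi(t_n)$ and $s^2=s^2(t_n)\sim\psi'(t_n)$.

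Next I would estimate $z_1$ directly. Since $y_1$ is fixed while $a_n\to\infty$, one has $|a_n-y_1|\le 2a_n$ for $n$ large, whereas the denominator of (\ref{z1}) satisfies $s\sqrt{n-1}=\sqrt{s^2(t_n)}\,\sqrt{n-1}\sim\sqrt{n\,\psi'(t_n)}$. Therefore
\[
|z_1|=\frac{|a_n-y_1|}{s\sqrt{n-1}}\le\frac{2a_n}{s\sqrt{n-1}}\sim\frac{2\,\psi(t_n)}{\sqrt{n\,\psi'(t_n)}}.
\]
As $\psi(t_n)\sim a_n\to\infty$, for $n$ large $\psi(t_n)\ge 1$, so $\psi(t_n)\le\psi(t_n)^2$ and the right-hand side is bounded above by $2\,\psi(t_n)^2/\sqrt{n\,\psi'(t_n)}$, which tends to $0$ by (\ref{croissance de a}). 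Combining, $\lim_{n\to\infty}z_1=0$.

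I do not expect a genuine obstacle here: the only delicate point is that one is replacing the exact quantities $m(t_n)$, $s^2(t_n)$ by their equivalents $\psi(t_n)$, $\psi'(t_n)$ inside a ratio in which both numerator and denominator move with $n$; this is legitimate because the relation $\sim$ is stable under products, quotients and square roots of positive quantities, and because the additive shift $-y_1$ and the discrepancy between $\sqrt{n-1}$ and $\sqrt{n}$ are asymptotically harmless. Put differently, the whole force of the lemma is already contained in the Abelian Theorem~\ref{order of s} and in the particular way hypothesis (\ref{croissance de a}) was phrased.
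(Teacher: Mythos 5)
Your proof is correct and follows essentially the same route as the paper's: both reduce $z_1$ to $\psi(t)/\sqrt{n\,\psi'(t)}$ via the equivalences $m(t)\sim\psi(t)$, $s^2(t)\sim\psi'(t)$ of Theorem~\ref{order of s} and then invoke hypothesis~(\ref{croissance de a}), your step $\psi(t)\le\psi(t)^2$ (valid once $\psi(t)\ge 1$) being a clean way to align the bound with the exact form of that hypothesis. One side remark: you silently, and correctly, read the numerator of~(\ref{z1}) as $a_n-y_1$ rather than the printed $na_n-y_1$, which is the reading the paper's own proof uses (it writes $z_1\sim m(t)/(s(t)\sqrt n)$) and the only one under which the lemma can hold; the paper in fact records the slightly stronger estimate $z_1^2=o(1/\sqrt n)$, which it re-uses later, but the lemma as stated only asserts $z_1\to 0$ and your argument establishes exactly that.
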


Proof: When $n\rightarrow\infty$, it holds
\[
z_{1}\sim m(t)/(s(t)\sqrt{n}).
\]
From Theorem $\ref{order of s}$, it holds $m(t)\sim{\psi(t)}$ and
$s(t)\sim\sqrt{\psi^{^{\prime}}(t)}$. Hence we have
\[
z_{1}\sim\frac{\psi(t)}{\sqrt{n\psi^{^{\prime}}(t)}}.
\]

Hence
\[
z_{1}^{2}\sim\frac{\psi(t)^{2}}{{n\psi^{^{\prime}}(t)}}=\frac{\psi(t)^{2}%
}{{\sqrt{n}\psi^{^{\prime}}(t)}}\frac{1}{\sqrt{n}}=o\Big(\frac{1}{\sqrt{n}%
}\Big).
\]

\begin{thm}
\label{point conditional density} With the above notation and hypotheses
denoting $m:=m(t)$, assuming (\ref{croissance de a}), it holds
\[
p_{a_{n}}(y_{1})=p(X_{1}=y_{1}|S_{1}^{n}=na_{n})=g_{m}(y_{1}%
)\Big(1+o(1)\Big).
\]
with
\[
g_{m}(y_{1})=\pi^{m}(X_{1}=y_{1}).
\]

\end{thm}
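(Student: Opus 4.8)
The plan is to write the conditional density $p_{a_n}(y_1)$ exactly via the usual conditioning-on-a-sum identity, and then replace the two densities that appear by their Edgeworth approximations from Theorem \ref{3theorem1}. First I would recall that, since the $X_i$ are i.i.d.\ with density $p$, the conditional density of $X_1$ given $S_1^n = na_n$ is
\[
p_{a_n}(y_1) = p(y_1)\,\frac{f_{S_2^n}(na_n - y_1)}{f_{S_1^n}(na_n)},
\]
where $f_{S_k^n}$ denotes the density of the sum of $n-1$ (resp.\ $n$) i.i.d.\ copies of $X$. The standard tilting trick is to rewrite this in terms of the tilted density $\pi^{a_n}$ at the parameter $t$ with $m(t)=a_n$: multiplying and dividing by $e^{ty_1}/\phi(t)$ and by the corresponding factors for the sums, the exponential weights telescope (since $(na_n - y_1) + y_1 = na_n$), leaving
\[
p_{a_n}(y_1) = \pi^{a_n}(y_1)\,\frac{\pi_{n-1}^{a_n}(na_n - y_1)}{\pi_n^{a_n}(na_n)},
\]
where $\pi_{k}^{a_n}$ is the $k$-fold convolution of $\pi^{a_n}$. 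So the entire problem reduces to showing that the ratio of convolution densities tends to $1$.

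Next I would normalize. Writing $\bar\pi^{a_n}(x) = s\,\pi^{a_n}(sx + a_n)$ and passing to the normalized $n$-fold convolution $\rho_n(x) = \sqrt n\,\bar\pi_n^{a_n}(\sqrt n x)$ as in Section \ref{SectAbelEdgeworth}, the numerator $\pi_{n-1}^{a_n}(na_n - y_1)$ corresponds, after centering by $(n-1)a_n$ and scaling by $s\sqrt{n-1}$, to evaluating the normalized $(n-1)$-fold convolution at the point
\[
\frac{na_n - y_1 - (n-1)a_n}{s\sqrt{n-1}} = \frac{a_n - y_1}{s\sqrt{n-1}},
\]
which is $o(1)$ (it is $z_1$ up to the trivial shift, and Lemma \ref{3lemma z} gives $z_1\to 0$; the extra $a_n/(s\sqrt{n-1})$ term is handled the same way via Theorem \ref{order of s} and (\ref{croissance de a})). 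The denominator $\pi_n^{a_n}(na_n)$ corresponds to evaluating the normalized $n$-fold convolution at $0$. Applying Theorem \ref{3theorem1} to each — with $n$ and with $n-1$ rows, noting the $\mu_3/(s^3\sqrt n)$ correction term vanishes by Corollary \ref{3cor1} — both the numerator and the denominator are $\phi(0) + o(1) = (2\pi)^{-1/2} + o(1)$, up to the deterministic Jacobian factors $s\sqrt{n-1}$ and $s\sqrt n$ whose ratio is $\sqrt{(n-1)/n}\to 1$. Hence the ratio of convolution densities is $1 + o(1)$, and since $\pi^{a_n}(y_1) = g_m(y_1)$ by definition, the claim follows.

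The main obstacle is uniformity and the legitimacy of evaluating the Edgeworth expansion of Theorem \ref{3theorem1} at the moving point associated with the numerator: one needs the $o(1/\sqrt n)$ remainder there to be genuinely negligible compared to the main term $\phi$ evaluated near $0$, and one needs the argument $(a_n - y_1)/(s\sqrt{n-1})$ to stay in the region where the theorem's uniform control applies. This is exactly why the growth condition (\ref{croissance de a}) is imposed: it forces $z_1^2 = o(1/\sqrt n)$ (Lemma \ref{3lemma z}), so that not only does the evaluation point tend to $0$, but $\phi$ at that point equals $\phi(0)(1 + o(1/\sqrt n)) $, keeping the substitution consistent with the order of the Edgeworth remainder. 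The secondary technical point is the passage from $n$ to $n-1$ summands in the triangular-array Edgeworth theorem, which is routine since $t = t_n$ is the same in both and $s^2(t_n)(n-1)\sim s^2(t_n) n$; I would simply remark that Theorem \ref{3theorem1} applies verbatim with $n-1$ in place of $n$.
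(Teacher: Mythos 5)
Your plan coincides with the paper's own proof: tilt to $\pi^{a_n}$ via the invariance identity, normalize the ratio of convolution densities, apply the triangular-array Edgeworth expansion (Theorem \ref{3theorem1}) to both the $(n-1)$-fold and $n$-fold parts, and then use Lemma \ref{3lemma z} together with Corollary \ref{3cor1} to make the correction terms vanish. One clarification on the point that seemed to worry you: the displayed definition (\ref{z1}) contains an evident typo and should read $z_1=(a_n-y_1)/(s\sqrt{n-1})$ (this is what both Lemma \ref{3lemma z} and the body of the proof actually use), so the centered and scaled argument $(a_n-y_1)/(s\sqrt{n-1})$ you computed \emph{is} $z_1$ itself, and there is no extra $a_n/(s\sqrt{n-1})$ shift to handle separately.
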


Proof:

But for limit arguments which are specific to the extreme deviation context,
the proof of this local result is classical; see the LDP\ case in
\cite{Diaconis1}.

We make use of the following invariance property:

For all $y_{1}$ and all $\alpha$ in the range of $X_{1}$%
\[
p(X_{1}=y_{1}|S_{1}^{n}=na_{n})=\pi^{\alpha}(X_{1}=y_{1}|S_{1}^{n}=na_{n})
\]
where on the LHS, the r.v's $X_{i}$ 's are sampled i.i.d. under $p$ and on the
RHS, sampled i.i.d. under $\pi^{\alpha}.$

It thus holds%

\begin{align*}
&  p(X_{1}=y_{1}|S_{1}^{n}=na_{n}-y_{1})=\pi^{m}(X_{1}=y_{1}|S_{1}^{n}%
=na_{n})\\
&  =\pi^{m_{1}}(X_{1}=y_{1})\frac{\pi^{m_{1}}(S_{2}^{n}=na_{n}-y_{1})}{\pi
^{m}(S_{1}^{n}=na_{n})}\\
&  =\frac{\sqrt{n}}{\sqrt{n-1}}\pi^{m_{1}}(X_{1}=y_{1})\frac{\widetilde
{\pi_{n-1}}(\frac{m-y_{1}}{s\sqrt{n-1}})}{\widetilde{\pi_{n}}(0)}\\
&  =\frac{\sqrt{n}}{\sqrt{n-1}}\pi^{m_{1}}(X_{1}=y_{1})\frac{\widetilde
{\pi_{n-1}}(z_{1})}{\widetilde{\pi_{n}}(0)},
\end{align*}
where $\widetilde{\pi_{n-1}}$ is the normalized density of $S_{2}^{n}$ under
i.i.d. sampling under $\pi^{m};$correspondingly, $\widetilde{\pi_{n}}$ is the
normalized density of $S_{1}^{n}$ under the same sampling. Note that a r.v.
with density $\pi^{m}$ has expectation $m=a_{n}$ and variance $s^{2}$.

Perform a third-order Edgeworth expansion of $\widetilde{\pi_{n-1}}(z_{1})$,
using Theorem $\ref{3theorem1}$. It follows
\[
\widetilde{\pi_{n-1}}(z_{1})=\phi(z_{1})\Big(1+\frac{\mu_{3}}{6s^{3}\sqrt
{n-1}}(z_{1}^{3}-3z_{1})\Big)+o\Big(\frac{1}{\sqrt{n}}\Big),
\]
The approximation of $\widetilde{\pi_{n}}(0)$ is obtained from
(\ref{Edgeworth}) through
\[
\widetilde{\pi}(0)=\phi(0)\Big(1+o\big(\frac{1}{\sqrt{n}}\big)\Big).
\]
It follows that
\begin{align*}
&  p(X_{1}=y_{1}|S_{1}^{n}=na_{n})\\
&  =\frac{\sqrt{n}}{\sqrt{n-1}}\pi^{m}(X_{1}=y_{i})\frac{\phi(z_{1})}{\phi
(0)}\Big[1+\frac{\mu_{3}}{6s^{3}\sqrt{n-1}}(z_{1}^{3}-3z_{1})+o\Big(\frac
{1}{\sqrt{n}}\Big)\Big]\\
&  =\frac{\sqrt{2\pi n}}{\sqrt{n-1}}\pi^{m}(X_{1}=y_{1}){\phi(z_{1}%
)}\big(1+R_{n}+o(1/\sqrt{n})\big),
\end{align*}
where
\[
R_{n}=\frac{\mu_{3}}{6s^{3}\sqrt{n-1}}(z_{1}^{3}-3z_{1}).
\]

Under condition $(\ref{croissance de a})$, using Lemma $\ref{3lemma z}$, it
holds $z_{1}\rightarrow0$ as $a_{n}\rightarrow\infty$, and under Corollary
$(\ref{3cor1})$, $\mu_{3}/s^{3}\rightarrow0.$ This yields
\[
R_{n}=o\big(1/\sqrt{n}\big),
\]
which gives
\begin{align*}
&  p(X_{1}=y_{1}|S_{1}^{n}=na_{n})=\frac{\sqrt{2\pi n}}{\sqrt{n-1}}\pi
^{m}(X_{1}=y_{1}){\phi(z_{1})}\big(1+o(1/\sqrt{n})\big)\\
&  =\frac{\sqrt{n}}{\sqrt{n-1}}\pi^{m}(X_{1}=y_{1}){\big(1-z_{1}^{2}%
/2+o(z_{1}^{2})\big)}\big(1+o(1/\sqrt{n})\big),
\end{align*}
where we used a Taylor expansion in the second equality. Using once more Lemma
$\ref{3lemma z}$, under conditions $(\ref{croissance de a})$, we have as
$a_{n}\rightarrow\infty$
\[
z_{1}^{2}=o(1/\sqrt{n}),
\]
whence we get
\begin{align*}
p(X_{1}=y_{1}|S_{1}^{n}=na_{n})  &  =\Big(\frac{\sqrt{n}}{\sqrt{n-1}}\pi
^{m}(X_{1}=y_{1})\big(1+o(1/\sqrt{n})\big)\Big)\\
&  =\Big(1+o\big(\frac{1}{\sqrt{n}}\big)\Big)\pi^{m}(X_{1}=y_{1}),
\end{align*}
which completes the proof.

\subsection{Gibbs conditional principle in variation norm}

\subsubsection{\bigskip Strengthening the local approximation}

We now turn to a stronger approximation of $p_{a_{n}}.$ Consider
$Y_{1},..,Y_{n}$ with distribution $P_{a_{n}}$ and $Y_{1}$ with density
$p_{a_{n}}$ and the resulting random variable $p_{a_{n}}\left(  Y_{1}\right)
.$ We prove the following result

\begin{thm}
\label{ThmLocalProba}With all the above notation and hypotheses it holds%
\[
p_{a_{n}}\left(  Y_{1}\right)  =g_{a_{n}}\left(  Y_{1}\right)  \left(
1+R_{n}\right)
\]
where
\[
g_{a_{n}}=\pi^{a_{n}}%
\]
the tilted density at point $a_{n}$ , and where $R_{n}$ is a function of
$Y_{1}$ $,..,Y_{n}$ such that $P_{a_{n}}\left(  \left\vert R_{n}\right\vert
>\delta\sqrt{n}\right)  \rightarrow0$ as $n\rightarrow\infty$ for any positive
$\delta.$
\end{thm}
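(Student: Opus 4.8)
The plan is to upgrade the pointwise statement of Theorem~\ref{point conditional density}, which controls $p_{a_n}(y_1)/\pi^{a_n}(y_1)$ for a \emph{fixed} deterministic $y_1$, to a statement valid for a \emph{random} argument $Y_1$ drawn under the conditional law $P_{a_n}$. Recall from the proof of Theorem~\ref{point conditional density} that, for every $y_1$ in the range of $X_1$,
\[
\frac{p_{a_n}(y_1)}{\pi^{a_n}(y_1)}=\frac{\sqrt{n}}{\sqrt{n-1}}\,\frac{\widetilde{\pi_{n-1}}(z_1)}{\widetilde{\pi_n}(0)},\qquad
z_1=z_1(y_1)=\frac{na_n-y_1}{s\sqrt{n-1}},
\]
so that $1+R_n$ is exactly this ratio and $R_n$ is a deterministic function of $z_1$, hence of $Y_1$ alone (through $z_1(Y_1)$), together with the Edgeworth remainders. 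The denominator $\widetilde{\pi_n}(0)=\phi(0)(1+o(1/\sqrt n))$ is a non-random quantity, so the only source of randomness in $R_n$ is the value of $z_1(Y_1)$. The key point is therefore to show that $z_1(Y_1)$ is small with $P_{a_n}$-probability tending to $1$, quantitatively enough that the Edgeworth expansion of $\widetilde{\pi_{n-1}}(z_1(Y_1))$ still yields $R_n=o_{P_{a_n}}(\sqrt n)$ — which is a very weak requirement.

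First I would make the substitution $z_1(Y_1)=(na_n-Y_1)/(s\sqrt{n-1})=(a_n-Y_1/n)\sqrt{n-1}/s+O(1/\sqrt n)$ and observe that, since $Y_1+\dots+Y_n=na_n$ under $P_{a_n}$, the variable $na_n-Y_1=Y_2+\dots+Y_n=S_2^n$ is itself a conditioned sum. More directly, I would use the explicit conditional marginal density: under $P_{a_n}$, $Y_1$ has density $p_{a_n}(y_1)$, which by Theorem~\ref{point conditional density} is $\pi^{a_n}(y_1)(1+o(1))$ pointwise, and $\pi^{a_n}$ has mean $a_n$ and variance $s^2=s^2(t)$. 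So I would estimate, via Markov/Chebyshev under $P_{a_n}$,
\[
P_{a_n}\bigl(|z_1(Y_1)|>\epsilon_n\bigr)=P_{a_n}\Bigl(|Y_1-a_n|\,> \epsilon_n\,s\sqrt{n-1}\Bigr)
\le \frac{\operatorname{Var}_{P_{a_n}}(Y_1)+o(1)}{\epsilon_n^2 s^2(n-1)}
\asymp \frac{1}{\epsilon_n^2\,n},
\]
using that $\operatorname{Var}_{P_{a_n}}(Y_1)=s^2(1+o(1))$ (which follows from the variation-norm closeness of $p_{a_n}$ to $\pi^{a_n}$, or can be read off the conditional-covariance structure of exchangeable conditioned sums). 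Choosing, say, $\epsilon_n=n^{-1/4}$ gives $z_1(Y_1)=o_{P_{a_n}}(n^{-1/4})$, hence $z_1(Y_1)^2=o_{P_{a_n}}(1/\sqrt n)$ and $z_1(Y_1)^3-3z_1(Y_1)=o_{P_{a_n}}(1)$.

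Next, on the event $\{|z_1(Y_1)|\le\epsilon_n\}$ I would simply re-run the computation from the proof of Theorem~\ref{point conditional density} verbatim, but now with $z_1=z_1(Y_1)$ random: the third-order Edgeworth expansion of Theorem~\ref{3theorem1} gives
\[
\widetilde{\pi_{n-1}}(z_1)=\phi(z_1)\Bigl(1+\frac{\mu_3}{6s^3\sqrt{n-1}}\bigl(z_1^3-3z_1\bigr)\Bigr)+o\Bigl(\frac{1}{\sqrt n}\Bigr),
\]
uniformly in $z_1$, so the uniformity of that expansion is what lets me plug in a random argument with no extra work. Combining with $\widetilde{\pi_n}(0)=\phi(0)(1+o(1/\sqrt n))$, with $\mu_3/s^3\to0$ from Corollary~\ref{3cor1}, and with $\phi(z_1)/\phi(0)=1-z_1^2/2+o(z_1^2)=1+o_{P_{a_n}}(1/\sqrt n)$, I obtain
\[
1+R_n=\frac{\sqrt n}{\sqrt{n-1}}\bigl(1+o_{P_{a_n}}(1/\sqrt n)\bigr)=1+o_{P_{a_n}}(1/\sqrt n),
\]
so in particular $P_{a_n}(|R_n|>\delta\sqrt n)\to0$ for every $\delta>0$ — the conclusion is in fact much stronger than asked, which is convenient since the next subsection only needs $R_n=o_{P_{a_n}}(\sqrt n)$ to pass to the $L^1$-distance bound. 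The main obstacle is the control of $\operatorname{Var}_{P_{a_n}}(Y_1)$: one must either quote the conditional covariance identity for sums conditioned on their value (so that $\operatorname{Var}_{P_{a_n}}(Y_1)$ is genuinely of order $s^2$, not larger), or bootstrap it from Theorem~\ref{point conditional density} together with a uniform-integrability / tail argument ensuring that the pointwise approximation $p_{a_n}\approx\pi^{a_n}$ transfers to second moments; everything else is a mechanical rerun of the fixed-$y_1$ argument with the uniform Edgeworth expansion doing the heavy lifting.
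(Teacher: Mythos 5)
Your high-level strategy --- isolate $R_n$ as the Edgeworth-ratio $\frac{\sqrt{n}}{\sqrt{n-1}}\,\widetilde{\pi_{n-1}}(z_1(Y_1))/\widetilde{\pi_n}(0)-1$, show $z_1(Y_1)$ is small in $P_{a_n}$-probability, and then rerun the fixed-$y_1$ computation using the uniformity of the Edgeworth expansion in Theorem~\ref{3theorem1} --- is exactly the paper's. The divergence is in the estimate you use to control $z_1(Y_1)$, and your choice creates a gap. You propose Chebyshev, which requires $\operatorname{Var}_{P_{a_n}}(Y_1)\asymp s^2$, and you correctly flag this as the main obstacle. It is in fact a serious one: the exchangeability identity $\operatorname{Cov}_{P_{a_n}}(Y_1,Y_2)=-\operatorname{Var}_{P_{a_n}}(Y_1)/(n-1)$ relates the variance to the covariance without pinning either down, and bootstrapping the variance from the variation-norm closeness of $p_{a_n}$ and $\pi^{a_n}$ is both circular (that closeness is Theorem~\ref{ThmcvVarTot}, which in the paper is \emph{deduced from} the result you are proving) and insufficient on its own (variation-norm convergence does not transfer second moments without an additional uniform-integrability argument).

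The paper sidesteps the issue with a weaker but trivially available bound: since $E_{P_{a_n}}(Y_1)=a_n$ exactly, Markov's inequality gives $Y_1=O_{P_{a_n}}(a_n)$, hence $Z_1=O_{P_{a_n}}\bigl(a_n/(s\sqrt{n})\bigr)$, and the growth condition~(\ref{croissance de a}), i.e.\ $\psi(t)^2/(\sqrt{n}\,\psi'(t))\to 0$ together with Theorem~\ref{order of s}, then yields $Z_1^2=o_{P_{a_n}}(1/\sqrt{n})$ directly --- which is all the Edgeworth reduction requires. Your route, if the variance lemma were available, would give the tighter $Z_1=O_{P_{a_n}}(1/\sqrt{n})$, but that extra sharpness is not used anywhere. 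Replace Chebyshev by the first-moment Markov bound and your argument coincides with the paper's and closes cleanly.
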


\bigskip

This result is of greater relevance than the previous one. Indeed under
$P_{a_{n}}$ the r.v. $Y_{1}$ may take large values as $n$ tends to infinity.
At the contrary the approximation of $p_{a_{n}}$ by $g_{a_{n}}$ on any $y_{1}$
in $\mathbb{R}_{+}$ only provides some knowledge on $p_{a_{n}}$ on sets with
smaller and smaller probability under $p_{a_{n}}$ as $n$ increases $.$ Also it
will be proved that as a consequence of the above result, the $L^{1}$ norm
between $p_{a_{n}}$ and $g_{a_{n}}$ goes to $0$ as $n\rightarrow\infty$, a
result out of reach through the aforementioned result.

In order to adapt the proof of Theorem \ref{point conditional density} to the
present setting it is necessary to get some insight on the plausible values of
$Y_{1}$ under $P_{a_{n}}.$ It holds

\begin{lem}
Under $P_{a_{n}}$ it holds%
\[
Y_{1}=O_{P_{a_{n}}}\left(  a_{n}\right)  .
\]

\end{lem}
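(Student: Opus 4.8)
The plan is to show that for any $\varepsilon>0$ there is a constant $M=M(\varepsilon)$ with $P_{a_{n}}(Y_{1}>Ma_{n})<\varepsilon$ for all large $n$; since $Y_{1}\geq 0$ this is exactly the assertion $Y_{1}=O_{P_{a_{n}}}(a_{n})$. First I would write the marginal density of $Y_{1}$ in the product form already used in the proof of Theorem \ref{point conditional density}, namely, for $y_{1}\geq 0$,
\[
p_{a_{n}}(y_{1})=p(X_{1}=y_{1}\mid S_{1}^{n}=na_{n})=\frac{\sqrt{n}}{\sqrt{n-1}}\,\pi^{m}(X_{1}=y_{1})\,\frac{\widetilde{\pi_{n-1}}\!\big(z_{1}(y_{1})\big)}{\widetilde{\pi_{n}}(0)},
\]
where $m=m(t)=a_{n}$, $s^{2}=s^{2}(t)$, and $z_{1}(y_{1})=(na_{n}-y_{1})/(s\sqrt{n-1})$, with $\widetilde{\pi_{n-1}}$, $\widetilde{\pi_{n}}$ the normalized densities of sums of $n-1$ (resp. $n$) i.i.d.\ draws from $\pi^{m}$. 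The denominator $\widetilde{\pi_{n}}(0)$ is bounded below by a positive constant by Theorem \ref{3theorem1}, so it suffices to bound the numerator. Then $P_{a_{n}}(Y_{1}>Ma_{n})=\int_{Ma_{n}}^{\infty}p_{a_{n}}(y_{1})\,dy_{1}$, and I would split this integral according to whether $y_{1}$ is moderately large or very large, i.e.\ at $y_{1}=na_{n}/2$, say.

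For the main range $Ma_{n}<y_{1}\leq na_{n}/2$, the argument $z_{1}(y_{1})$ stays bounded away from $0$ (indeed $z_{1}(y_{1})\geq (na_{n}/2)/(s\sqrt{n-1})$, which by Theorem \ref{order of s} behaves like $\tfrac12\psi(t)\sqrt{n}/\sqrt{\psi'(t)}\to\infty$ under (\ref{croissance de a})), so the Edgeworth expansion of Theorem \ref{3theorem1} gives $\widetilde{\pi_{n-1}}(z_{1})\leq C\phi(z_{1})\leq C$, a uniform bound. Hence on this range $p_{a_{n}}(y_{1})\leq C'\pi^{m}(X_{1}=y_{1})$ and the contribution is at most $C'\,\pi^{m}(X_{1}>Ma_{n})$. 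Now $\pi^{m}$ is the tilted law with mean $m=a_{n}$ and variance $s^{2}(t)$; by Theorem \ref{order of s}, $s^{2}(t)\sim\psi'(t)$, and the growth condition (\ref{croissance de a}) forces $\psi'(t)=o(\psi(t)^{2}/\sqrt n)=o(a_{n}^{2})$, so the standard deviation of $\mathcal{X}_{t}$ is $o(a_{n})$. A Chebyshev bound then gives $\pi^{m}(X_{1}>Ma_{n})\leq s^{2}(t)/((M-1)^{2}a_{n}^{2})\to 0$ as $n\to\infty$, uniformly once $M>2$; choosing $M$ large makes this $<\varepsilon/2$.

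For the tail range $y_{1}>na_{n}/2$, I would bound crudely: here both $\pi^{m}(X_{1}=y_{1})$ and $\widetilde{\pi_{n-1}}(z_{1}(y_{1}))$ are controlled — the latter by the uniform bound $\widetilde{\pi_{n-1}}\leq C$ from Theorem \ref{3theorem1} (valid uniformly in its argument), so the contribution is at most $C''\,\pi^{m}(X_{1}>na_{n}/2)$, which is dominated by the previous estimate. Combining the two ranges gives $P_{a_{n}}(Y_{1}>Ma_{n})<\varepsilon$ for all large $n$, as required. The one genuinely delicate point is the interplay in the main range between the Gaussian factor $\phi(z_{1})$ — which actually decays fast once $z_{1}$ is large — and the tilted tail $\pi^{m}(X_{1}>Ma_{n})$: I chose to discard the decay of $\phi(z_{1})$ and lean entirely on Chebyshev for $\pi^{m}$, which is enough because (\ref{croissance de a}) already makes $\mathrm{sd}(\mathcal{X}_{t})=o(a_{n})$; the expected obstacle is simply checking that the Edgeworth bound of Theorem \ref{3theorem1} is genuinely uniform over the whole real line (which it is, as stated), so that no separate large-deviation estimate for $\widetilde{\pi_{n-1}}$ is needed.
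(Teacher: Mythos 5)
Your argument is heavily over-engineered relative to the paper's: the paper proves this lemma in one line, using only exchangeability. Since the $X_i$ are i.i.d.\ and the conditioning event is symmetric, $E(Y_1\mid S_1^n=na_n)=\frac1n E(S_1^n\mid S_1^n=na_n)=a_n$, and Markov's inequality gives $P_{a_n}(Y_1>Ma_n)\le 1/M$. No tilting, no Edgeworth expansion, no Chebyshev estimate on $\Pi^{a_n}$ is needed, and in particular nothing about the growth rate of $a_n$ or the regularity class of $h$ enters at all. This matters because the lemma is a \emph{preliminary} to Theorem~\ref{ThmLocalProba}; rebuilding essentially the whole tilting/Edgeworth computation just to get an $O_{P_{a_n}}(a_n)$ bound on $Y_1$ defeats the purpose of stating the lemma first.

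Within your own route there is also a concrete error. You write that (\ref{croissance de a}) ``forces $\psi'(t)=o(\psi(t)^2/\sqrt n)=o(a_n^2)$''. But (\ref{croissance de a}) states $\psi(t)^2/\sqrt{n\psi'(t)}\to0$, which rearranges to $\psi(t)^4/n=o(\psi'(t))$ — a \emph{lower} bound on $\psi'(t)$, not the upper bound you need; the implication you invoke is backwards. The statement $s^2(t)=o(a_n^2)$ is nonetheless true, but for a different reason: it follows from Theorem~\ref{order of s} and the structure of $\mathfrak R$. Indeed when $h\in R_\beta$, $\psi'(t)\sim\psi(t)/(\beta t)$ so $\psi'(t)/\psi(t)^2\sim 1/(\beta t\psi(t))\to0$; when $h\in R_\infty$, $\psi'(t)=\psi(t)\epsilon(t)/t$ so $\psi'(t)/\psi(t)^2=\epsilon(t)/(t\psi(t))\to0$. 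With that repair your Chebyshev step goes through for any fixed $M>1$, and in fact you would be proving the stronger statement $Y_1/a_n\to1$ in $P_{a_n}$-probability. Two smaller remarks: the split of the integral at $na_n/2$ is unnecessary, since the uniform Edgeworth bound on $\widetilde{\pi_{n-1}}$ and the lower bound on $\widetilde{\pi_n}(0)$ already give $p_{a_n}(y_1)\le C'\pi^m(X_1=y_1)$ for \emph{all} $y_1$; and you carry over the paper's typo in display (\ref{z1}) — the argument of $\widetilde{\pi_{n-1}}$ should be $(a_n-y_1)/(s\sqrt{n-1})$, not $(na_n-y_1)/(s\sqrt{n-1})$ — though this does not affect the uniform bound you use.
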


Proof: Without loss of generality we can assume $Y_{1}>0.$ By Markov Inequality:%

\[
P\left(  \left.  Y_{1}>u\right\vert S_{1}^{n}=na_{n}\right)  \leq
\frac{E\left(  \left.  Y_{1}\right\vert S_{1}^{n}=na_{n}\right)  }{u}%
=\frac{a_{n}}{u}%
\]
\bigskip which goes to $0$ for all $u=u_{n}$ such that lim$_{n\rightarrow
\infty}u_{n}/a_{n}=\infty.$

\bigskip

We now turn back to the proof of Theorem \ref{ThmLocalProba}. Define
\begin{equation}
Z_{1}:=\frac{na_{n}-Y_{1}}{s\sqrt{n-1}} \label{Z1}%
\end{equation}
the natural counterpart of $z_{1}$ as defined in (\ref{z1}).

It holds%
\[
Z_{1}=O_{p_{a_{n}}}\left(  1/\sqrt{n}\right)  .
\]%
\[
P\left(  \left.  X_{1}=Y_{1}\right\vert S_{1}^{n}=na_{n}\right)
=P(X_{1}=Y_{1})\frac{P\left(  S_{2}^{n}=na_{n}-Y_{1}\right)  }{P\left(
S_{1}^{n}=na_{n}\right)  }%
\]
in which the tilting substitution of measures is performed, with tilting
density $\pi^{a_{n}}$, followed by normalization. Now following verbatim the
proof of Theorem \ref{point conditional density} if the growth condition
(\ref{croissance de a}) holds,\ it follows that
\[
P\left(  \left.  X_{1}=Y_{1}\right\vert S_{1}^{n}=na_{n}\right)  =\pi^{a_{n}%
}\left(  Y_{1}\right)  \left(  1+R_{n}\right)
\]
as claimed where the order of magnitude of $R_{n}$ is $o_{P_{a_{n}}}\left(
1/\sqrt{n}\right)  $. We have proved Theorem \ref{ThmLocalProba}.

Denote the conditional probabilities by $P_{a_{n}}$ and $G_{a_{n}}$ on
$\mathbb{R}^{n}$ , which correspond to the marginal density functions
$p_{a_{n}}$ and $g_{a_{n}}$ on $\mathbb{R}$, respectively.

\subsubsection{From approximation in probability to approximation in variation
norm}

We now consider the approximation of the margin of $P_{a_{n}}$ by $G_{a_{n}}$
in variation norm.

The main ingredient is the fact that in the present setting approximation of
$p_{a_{n}}$ by $g_{a_{n}}$ in probability plus some rate implies approximation
of the corresponding measures in variation norm. This approach has been
developed in \cite{BrCaron}; we state a first lemma which states that whether
two densities are equivalent in probability with small relative error when
measured according to the first one, then the same holds under the sampling of
the second.

Let $\mathfrak{R}_{n}$ and $\mathfrak{S}_{n}$ denote two p.m's on
$\mathbb{R}^{n}$ with respective densities $\mathfrak{r}_{n}$ and
$\mathfrak{s}_{n}.$

\begin{lem}
\label{Lemma:commute_from_p_n_to_g_n} Suppose that for some sequence
$\varepsilon_{n}$ which tends to $0$ as $n$ tends to infinity%
\begin{equation}
\mathfrak{r}_{n}\left(  Y_{1}^{n}\right)  =\mathfrak{s}_{n}\left(  Y_{1}%
^{n}\right)  \left(  1+o_{\mathfrak{R}_{n}}(\varepsilon_{n})\right)
\label{p_n equiv g_n under p_n}%
\end{equation}
as $n$ tends to $\infty.$ Then
\begin{equation}
\mathfrak{s}_{n}\left(  Y_{1}^{n}\right)  =\mathfrak{r}_{n}\left(  Y_{1}%
^{n}\right)  \left(  1+o_{\mathfrak{S}_{n}}(\varepsilon_{n})\right)  .
\label{g_n equiv p_n under g_n}%
\end{equation}

\end{lem}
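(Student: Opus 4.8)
The plan is to translate the hypothesis, which is a statement about the $\mathfrak{R}_n$-probability of a certain event, into a statement about the $\mathfrak{S}_n$-probability of a closely related event, using only the definition of $o_{\mathfrak{R}_n}(\varepsilon_n)$ and a change of measure on the relevant sets. Set, for a small fixed $\delta>0$,
\[
A_n:=\Big\{ y\in\mathbb{R}^n:\ \big|\mathfrak{r}_n(y)-\mathfrak{s}_n(y)\big|>\delta\,\varepsilon_n\,\mathfrak{r}_n(y)\Big\},
\]
so that the hypothesis (\ref{p_n equiv g_n under p_n}) says exactly $\mathfrak{R}_n(A_n)\to 0$ for every $\delta>0$. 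I want to conclude $\mathfrak{S}_n(B_n)\to 0$ for every $\delta>0$, where $B_n$ is the analogous event with roles swapped,
\[
B_n:=\Big\{ y:\ \big|\mathfrak{s}_n(y)-\mathfrak{r}_n(y)\big|>\delta\,\varepsilon_n\,\mathfrak{s}_n(y)\Big\}.
\]

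First I would observe that off $A_n$ one has $\mathfrak{s}_n(y)=\mathfrak{r}_n(y)(1+\theta(y))$ with $|\theta(y)|\le\delta\varepsilon_n$; in particular, for $n$ large enough that $\delta\varepsilon_n<1/2$, on $A_n^c$ the two densities are comparable, $\tfrac12\mathfrak{r}_n(y)\le\mathfrak{s}_n(y)\le\tfrac32\mathfrak{r}_n(y)$, and moreover $A_n^c\subseteq B_n^c$ once $\delta$ is replaced by, say, $2\delta$ (since $|\mathfrak{s}_n-\mathfrak{r}_n|\le\delta\varepsilon_n\mathfrak{r}_n\le 2\delta\varepsilon_n\mathfrak{s}_n$ there). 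Hence $B_n':=\{|\mathfrak{s}_n-\mathfrak{r}_n|>2\delta\varepsilon_n\mathfrak{s}_n\}\subseteq A_n$. Then
\[
\mathfrak{S}_n(B_n')=\int_{B_n'}\mathfrak{s}_n(y)\,dy\le\int_{A_n}\mathfrak{s}_n(y)\,dy
=\int_{A_n\cap A_n^c}\mathfrak{s}_n+\int_{A_n\setminus A_n^c}\mathfrak{s}_n
\le\tfrac32\int_{A_n}\mathfrak{r}_n(y)\,dy + \int_{A_n}\mathfrak{s}_n,
\]
which is not quite closed because the last integral is over a set where we have no a priori comparison. The clean way around this is to split $A_n$ itself: on $A_n$ we still may or may not have the two-sided comparison. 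So instead I would argue directly: $B_n'\subseteq A_n$, and on $A_n$ write $\int_{A_n}\mathfrak{s}_n\,dy$. Since $B_n'\subseteq A_n$ and on $A_n\cap (B_n')^c$ we have $\mathfrak{s}_n\le(1+2\delta\varepsilon_n)\cdots$ — this is getting circular, so the robust route is: just use $B_n'\subseteq A_n$ and bound $\mathfrak{S}_n(B_n')=\int_{B_n'}\mathfrak{s}_n\le\int_{A_n}\mathfrak{s}_n$, then control $\int_{A_n}\mathfrak{s}_n$ by splitting $A_n$ into the part where $\mathfrak{s}_n\le\mathfrak{r}_n$ (contributing at most $\int_{A_n}\mathfrak{r}_n=\mathfrak{R}_n(A_n)\to0$) and the part where $\mathfrak{s}_n>\mathfrak{r}_n$. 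On that second part one would need an additional mild integrability assumption — e.g. that $\mathfrak{S}_n(A_n)\to 0$ follows because $A_n$ has small $\mathfrak{R}_n$-mass and the densities are "not too far apart globally" — but in the present application $\mathfrak{r}_n=p_{a_n}$ and $\mathfrak{s}_n=g_{a_n}=\pi^{a_n}$ are both genuine densities normalizing to $1$, which is precisely the leverage to use.

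The cleanest complete argument, and the one I would actually write, uses the total-mass-$1$ normalization directly. Since $\int\mathfrak{r}_n=\int\mathfrak{s}_n=1$, we have $\int(\mathfrak{s}_n-\mathfrak{r}_n)=0$, hence $\int(\mathfrak{s}_n-\mathfrak{r}_n)^+=\int(\mathfrak{s}_n-\mathfrak{r}_n)^-=:\tfrac12\|\mathfrak{s}_n-\mathfrak{r}_n\|_1$. On $A_n^c$, $|\mathfrak{s}_n-\mathfrak{r}_n|\le\delta\varepsilon_n\mathfrak{r}_n$, so $\int_{A_n^c}|\mathfrak{s}_n-\mathfrak{r}_n|\le\delta\varepsilon_n$; and $\int_{A_n}(\mathfrak{s}_n-\mathfrak{r}_n)^-\le\int_{A_n}\mathfrak{r}_n=\mathfrak{R}_n(A_n)$. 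Therefore
\[
\int_{A_n}(\mathfrak{s}_n-\mathfrak{r}_n)^+ \;=\; \int_{A_n}(\mathfrak{s}_n-\mathfrak{r}_n)^- + \int_{A_n^c}(\mathfrak{r}_n-\mathfrak{s}_n)^- - \int_{A_n^c}(\mathfrak{s}_n-\mathfrak{r}_n)^+ \;\le\; \mathfrak{R}_n(A_n)+\delta\varepsilon_n,
\]
so $\int_{A_n}\mathfrak{s}_n\le \int_{A_n}\mathfrak{r}_n+\mathfrak{R}_n(A_n)+\delta\varepsilon_n\le 2\mathfrak{R}_n(A_n)+\delta\varepsilon_n\to 0$. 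Combined with $B_n'\subseteq A_n$ this gives $\mathfrak{S}_n(B_n')\to 0$ for every $\delta>0$, which is exactly (\ref{g_n equiv p_n under g_n}). The main obstacle is precisely this last point — the one-sided comparison of densities off $A_n$ gives you nothing about the $\mathfrak{S}_n$-mass of $A_n$ without extra input — and the resolution is to exploit the common normalization $\int\mathfrak{r}_n=\int\mathfrak{s}_n=1$ to transfer mass from the ``$\mathfrak{s}_n$ large'' part of $A_n$ to the controlled region $A_n^c$.
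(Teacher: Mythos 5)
Your proof is correct, but it takes a different and somewhat longer route than the paper's. The paper works directly with the \emph{good} set
$A_{n,\delta\varepsilon_n}:=\{(1-\delta\varepsilon_n)\mathfrak{s}_n\le\mathfrak{r}_n\le(1+\delta\varepsilon_n)\mathfrak{s}_n\}$.
The hypothesis gives $\mathfrak{R}_n(A_{n,\delta\varepsilon_n})\to1$, and on that set $\mathfrak{r}_n/\mathfrak{s}_n\le 1+\delta\varepsilon_n$, so a one-line change of measure gives
$\mathfrak{R}_n(A_{n,\delta\varepsilon_n})=\int_{A_{n,\delta\varepsilon_n}}(\mathfrak{r}_n/\mathfrak{s}_n)\,\mathfrak{s}_n\le(1+\delta\varepsilon_n)\mathfrak{S}_n(A_{n,\delta\varepsilon_n})$;
since $\mathfrak{S}_n$ is a probability measure this forces $\mathfrak{S}_n(A_{n,\delta\varepsilon_n})\to1$, which is the conclusion. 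You instead attack the complementary \emph{bad} set and control its $\mathfrak{S}_n$-mass through the mass-conservation identity $\int(\mathfrak{s}_n-\mathfrak{r}_n)=0$: correctly exploiting that $\mathfrak{r}_n$ and $\mathfrak{s}_n$ both integrate to one (the same structural fact the paper uses, differently packaged) to show $\int_{A_n}(\mathfrak{s}_n-\mathfrak{r}_n)^+\le\mathfrak{R}_n(A_n)+\delta\varepsilon_n$ and hence $\mathfrak{S}_n(A_n)\to0$. Both arguments stand or fall on the probability normalization of the two laws; the paper's ``transfer high mass on the good set, conclude by complement'' avoids the positive/negative-part bookkeeping your version needs. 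Two small points to clean up: (i) in your displayed identity for $\int_{A_n}(\mathfrak{s}_n-\mathfrak{r}_n)^+$ the middle term should be $\int_{A_n^c}(\mathfrak{s}_n-\mathfrak{r}_n)^-$, not $\int_{A_n^c}(\mathfrak{r}_n-\mathfrak{s}_n)^-$; as written $(\mathfrak{r}_n-\mathfrak{s}_n)^-=(\mathfrak{s}_n-\mathfrak{r}_n)^+$ cancels the third term and the identity is false, though the intended bound is recovered once the typo is fixed; (ii) you should state explicitly that normalizing the relative error by $\mathfrak{r}_n$ (your $A_n$) or by $\mathfrak{s}_n$ (the phrasing of the hypothesis and conclusion) is equivalent up to replacing $\delta$ by $2\delta$ once $\delta\varepsilon_n<1/2$, a step you do carry out in spirit when passing from $A_n$ to $B_n'$.
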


\begin{proof}
Denote
\begin{equation*}
A_{n,\varepsilon_{n}}:=\left\{ y_{1}^{n}:(1-\varepsilon_{n})\mathfrak{s}%
_{n}\left( y_{1}^{n}\right) \leq\mathfrak{r}_{n}\left( y_{1}^{n}\right) \leq%
\mathfrak{s}_{n}\left( y_{1}^{n}\right) (1+\varepsilon_{n})\right\} .
\end{equation*}
It holds for all positive $\delta$%
\begin{equation*}
\lim_{n\rightarrow\infty}\mathfrak{R}_{n}\left(
A_{n,\delta\varepsilon_{n}}\right) =1.
\end{equation*}
Write
\begin{equation*}
\mathfrak{R}_{n}\left( A_{n,\delta\varepsilon_{n}}\right) =\int \mathbf{1}%
_{A_{n,\delta\varepsilon_{n}}}\left( y_{1}^{n}\right) \frac{\mathfrak{r}%
_{n}\left( y_{1}^{n}\right) }{\mathfrak{s}_{n}(y_{1}^{n})}\mathfrak{s}%
_{n}(y_{1}^{n})dy_{1}^{n}.
\end{equation*}
Since
\begin{equation*}
\mathfrak{R}_{n}\left( A_{n,\delta\varepsilon_{n}}\right) \leq
(1+\delta\varepsilon_{n})\mathfrak{S}_{n}\left(
A_{n,\delta\varepsilon_{n}}\right)
\end{equation*}
it follows that
\begin{equation*}
\lim_{n\rightarrow\infty}\mathfrak{S}_{n}\left(
A_{n,\delta\varepsilon_{n}}\right) =1,
\end{equation*}
which proves the claim.
\end{proof}

Applying this Lemma to the present setting yields%
\[
g_{a_{n}}\left(  Y_{1}\right)  =p_{a_{n}}\left(  Y_{1}\right)  \left(
1+o_{G_{a_{n}}}\left(  1/\sqrt{n}\right)  \right)
\]
as $n\rightarrow\infty.$

This fact entails, as in \cite{BrCaron}

\begin{thm}
\label{ThmcvVarTot}Under all the notation and hypotheses above the total
variation norm between the marginal distribution of $P_{a_{n}}$ and $G_{a_{n}%
}$ goes to $0$ as $n\rightarrow\infty.$
\end{thm}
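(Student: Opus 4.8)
The plan is to deduce Theorem \ref{ThmcvVarTot} from the two-sided relative approximation established just above, namely that $p_{a_n}(Y_1) = g_{a_n}(Y_1)(1+o_{P_{a_n}}(1/\sqrt n))$ and, by Lemma \ref{Lemma:commute_from_p_n_to_g_n}, also $g_{a_n}(Y_1) = p_{a_n}(Y_1)(1+o_{G_{a_n}}(1/\sqrt n))$. The key observation is that the total variation distance between the two marginal densities can be written as $\int |p_{a_n}(y_1)-g_{a_n}(y_1)|\,dy_1 = \int \bigl|1 - g_{a_n}(y_1)/p_{a_n}(y_1)\bigr|\, p_{a_n}(y_1)\, dy_1 = E_{P_{a_n}}\bigl[\,|1 - g_{a_n}(Y_1)/p_{a_n}(Y_1)|\,\bigr]$, so it suffices to show this expectation tends to $0$.

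First I would fix $\delta>0$ and introduce the ``good set'' $A_n := \{y_1 : |p_{a_n}(y_1) - g_{a_n}(y_1)| \le \delta\, p_{a_n}(y_1)\}$, equivalently the event that the relative error $|R_n|$ at $Y_1$ is at most $\delta$. By Theorem \ref{ThmLocalProba} (with $\sqrt n \delta$ replaced by any fixed $\delta$, which is weaker) we have $P_{a_n}(A_n^c)\to 0$. On $A_n$ the integrand $|1 - g_{a_n}/p_{a_n}|$ is bounded by $\delta$, contributing at most $\delta$ to the expectation. The remaining piece is $\int_{A_n^c} |p_{a_n}(y_1) - g_{a_n}(y_1)|\,dy_1 \le P_{a_n}(A_n^c) + G_{a_n}(A_n^c)$, using the triangle inequality $|p_{a_n}-g_{a_n}|\le p_{a_n}+g_{a_n}$ and that each density integrates to $1$ over $A_n^c$ against itself. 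The first term goes to $0$; for the second I would invoke the converse approximation $g_{a_n}(Y_1)=p_{a_n}(Y_1)(1+o_{G_{a_n}}(1/\sqrt n))$ from Lemma \ref{Lemma:commute_from_p_n_to_g_n}, which shows $G_{a_n}(A_n^c)\to 0$ as well (the set where the relative error exceeds $\delta$ is negligible under $G_{a_n}$ too). Hence $\limsup_n \int |p_{a_n}-g_{a_n}|\,dy_1 \le \delta$, and since $\delta>0$ was arbitrary the total variation distance tends to $0$.

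The main obstacle, and the reason Lemma \ref{Lemma:commute_from_p_n_to_g_n} is invoked, is precisely the control of the tail contribution $G_{a_n}(A_n^c)$: the approximation in Theorem \ref{ThmLocalProba} is only asserted \emph{in $P_{a_n}$-probability}, so a priori a set that is $P_{a_n}$-small could still carry substantial $G_{a_n}$-mass; converting the one-sided statement into a symmetric one is exactly what makes the $L^1$ bound close. Once that commutation is in hand the remainder is the standard bookkeeping above, splitting the $L^1$ integral over the good and bad sets and letting $\delta\downarrow 0$. I would also remark that no new use of the growth condition (\ref{croissance de a}) or of the Edgeworth expansion is needed here beyond what already feeds Theorem \ref{ThmLocalProba} and the commutation lemma; this final step is purely a measure-theoretic passage from ``approximation in probability with a rate'' to ``approximation in variation norm,'' identical in structure to the corresponding argument in \cite{BrCaron}.
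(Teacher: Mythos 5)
Your proposal is correct and follows essentially the same route as the paper: you split the $L^1$ integral $\int|p_{a_n}-g_{a_n}|$ over a ``good set'' where the relative error is at most $\delta$ (contributing $\le\delta$) and its complement (controlled via $P_{a_n}(A_n^c)\to 0$ from Theorem~\ref{ThmLocalProba} and $G_{a_n}(A_n^c)\to 0$ from Lemma~\ref{Lemma:commute_from_p_n_to_g_n}), then let $\delta\downarrow 0$. The paper phrases the decomposition through $\sup_{C\in\mathcal{B}(\mathbb{R})}|P_{a_n}(C)-G_{a_n}(C)|$ and an analogous good set $E_\delta$ (with the relative error measured against $g_{a_n}$ rather than $p_{a_n}$, an inconsequential difference), but the key ingredients, the order in which they are used, and the role of the commutation lemma in controlling the $G_{a_n}$-mass of the bad set are identical; your framing also makes the paper's closing appeal to Scheff\'e's lemma visibly redundant, since the $L^1$ bound is obtained directly.
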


The proof goes as follows

For all $\delta>0$, let
\[
E_{\delta}:=\left\{  y\in\mathbb{R}:\left\vert \frac{p_{a_{n}}\left(
y\right)  -g_{a_{n}}\left(  y\right)  }{g_{a_{n}}\left(  y\right)
}\right\vert <\delta\right\}
\]
which, turning to the marginal distribution of $P_{a_{n}}$ writes
\begin{equation}
\lim_{n\rightarrow\infty}P_{a_{n}}\left(  E_{\delta}\right)  =\lim
_{n\rightarrow\infty}G_{a_{n}}\left(  E_{\delta}\right)  =1.
\label{limPu1,nGu1,n}%
\end{equation}
It holds%
\[
\sup_{C\in\mathcal{B}\left(  \mathbb{R}\right)  }\left\vert P_{a_{n}}\left(
C\cap E_{\delta}\right)  -G_{a_{n}}\left(  C\cap E_{\delta}\right)
\right\vert \leq\delta\sup_{C\in\mathcal{B}\left(  \mathbb{R}\right)  }%
\int_{C\cap E_{\delta}}g_{a_{n}}\left(  y\right)  dy\leq\delta.
\]
By the above result (\ref{limPu1,nGu1,n})
\[
\sup_{C\in\mathcal{B}\left(  \mathbb{R}\right)  }\left\vert P_{a_{n}}\left(
C\cap E_{\delta}\right)  -P_{a_{n}}\left(  C\right)  \right\vert <\eta_{n}%
\]
and
\[
\sup_{C\in\mathcal{B}\left(  \mathbb{R}\right)  }\left\vert G_{a_{n}}\left(
C\cap E_{\delta}\right)  -G_{a_{n}}\left(  C\right)  \right\vert <\eta_{n}%
\]
for some sequence $\eta_{n}\rightarrow0$ ; hence
\[
\sup_{C\in\mathcal{B}\left(  \mathbb{R}\right)  }\left\vert P_{a_{n}}\left(
C\right)  -G_{a_{n}}\left(  C\right)  \right\vert <\delta+2\eta_{n}%
\]
for all positive $\delta,$ which proves the claim.

As a consequence, applying Scheffé's Lemma we have proved%

\[
\int\left\vert p_{a_{n}}-g_{a_{n}}\right\vert dx\rightarrow0\text{ \ as
}n\rightarrow\infty
\]
\bigskip as sought.

\begin{rem}
This result is to be paralleled with Theorem 1.6 in Diaconis and Freedman
\cite{Diaconis1} and Theorem 2.15 in Dembo and Zeitouni \cite{Dembo} which
provide a rate for this convergence in the LDP range.
\end{rem}

\subsection{\bigskip The asymptotic location of $X$ under the conditioned
distribution}

This section intends to provide some insight on the behaviour of $X_{1}$ under
the condition $\left(  S_{1}^{n}=na_{n}\right)  .$ It will be seen that
conditionally on $\left(  S_{1}^{n}=na_{n}\right)  $ the marginal distribution
of the sample concentrates around $a_{n}.$ Let $\mathcal{X}_{t}$ be a r.v.
with density $\pi^{a_{n}}$ where $m(t)=a_{n}$ and $a_{n}$ satisfies
(\ref{croissance de a}). Recall that $E\mathcal{X}_{t}=a_{n}$ $\ $\ and
$Var\mathcal{X}_{t}=s^{2}$. We evaluate the moment generating function of the
normalized variable $\left(  \mathcal{X}_{t}-a_{n}\right)  /s$. It holds%
\[
\log E\exp\lambda\left(  \mathcal{X}_{t}-a_{n}\right)  /s=-\lambda
a_{n}/s+\log\phi\left(  t+\frac{\lambda}{s}\right)  -\log\phi\left(  t\right)
.
\]
A second order Taylor expansion in the above display yields%
\[
\log E\exp\lambda\left(  \mathcal{X}_{t}-a_{n}\right)  /s=\frac{\lambda^{2}%
}{2}\frac{s^{2}\left(  t+\frac{\theta\lambda}{s}\right)  }{s^{2}}%
\]
where $\theta=\theta(t,\lambda)\in\left(  0,1\right)  .$ The following Lemma,
which is proved in the Appendix, states that the function $t\rightarrow s(t)$
is self neglecting. Namely

\begin{lem}
\label{lemmaSselfneglecting}Under the above hypotheses and notation, for any
compact set $K$
\[
\lim_{n\rightarrow\infty}\sup_{u\in K}\frac{s^{2}\left(  t+\frac{u}{s}\right)
}{s^{2}}=1.
\]

\end{lem}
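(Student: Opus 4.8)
The plan is to reduce the statement to an asymptotic property of the function $s^{2}(t)=\psi'(t)(1+o(1))$ established in Theorem~\ref{order of s}, and then to verify that property separately on the two classes $R_{\beta}$ and $R_{\infty}$ making up $\mathfrak{R}$. First I would fix a compact $K\subset\mathbb{R}$, write $K\subset[-A,A]$, and note that since $a_{n}\to\infty$ the corresponding tilting parameter $t=t_{n}$ satisfies $t_{n}\to\infty$; so it suffices to show
\[
\lim_{t\to\infty}\sup_{|u|\le A}\Big|\frac{s^{2}(t+u/s(t))}{s^{2}(t)}-1\Big|=0 .
\]
By Theorem~\ref{order of s}, $s^{2}(t)\sim\psi'(t)$ as $t\to\infty$, and moreover the argument $t+u/s(t)$ also tends to infinity (uniformly in $|u|\le A$, since $s(t)$ is eventually bounded below — this itself follows from $s^{2}(t)\sim\psi'(t)$ together with the regularity of $\psi$), so $s^{2}(t+u/s(t))\sim\psi'(t+u/s(t))$. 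Hence the claim is equivalent to
\[
\lim_{t\to\infty}\sup_{|u|\le A}\frac{\psi'(t+u/s(t))}{\psi'(t)}=1 ,
\]
which is the statement that $\psi'$ varies slowly relative to its own ``natural scale'' $1/s(t)\asymp 1/\sqrt{\psi'(t)}$; equivalently, the increment of the argument, $u/s(t)$, is negligible compared with the scale over which $\psi'$ changes appreciably.

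The core estimate is a mean value / integral bound: for $|u|\le A$,
\[
\log\frac{\psi'(t+u/s(t))}{\psi'(t)}=\int_{t}^{t+u/s(t)}\frac{\psi''(v)}{\psi'(v)}\,dv ,
\]
so it is enough to show that $\sup_{v\in[t,t+A/s(t)]}\bigl|\psi''(v)/\psi'(v)\bigr|\cdot (A/s(t))\to0$. Here I would use the regularity assumptions on $h$, transported to $\psi=h^{\leftarrow}$. In the class $R_{\beta}$ one has $h(x)=x^{\beta}l(x)$ with $l\in R_{0}$, whence $\psi(u)\sim (u/l_{\ast}(u))^{1/\beta}$ for a slowly varying $l_{\ast}$, giving $\psi'(u)\sim c\,u^{1/\beta-1}\ell(u)$ and $\psi''(u)/\psi'(u)=(1/\beta-1)/u+o(1/u)$, using the bounds \eqref{3section104} on $\epsilon$ and $\epsilon'$. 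Then $\psi''(v)/\psi'(v)=O(1/v)=O(1/t)$ on the interval, while $1/s(t)\sim\psi'(t)^{-1/2}\sim c\,t^{(1-1/\beta)/2}\ell(t)^{-1/2}$; the product is $O(t^{-(1+1/\beta)/2}\ell(t)^{-1/2})\to0$. In the class $R_{\infty}$, $\psi\in\widetilde{R_{0}}$, so $\psi(u)=\exp(\int_1^u\epsilon(r)/r\,dr)$ with $\epsilon(u)\to\infty$ and the control \eqref{3section103}; then $\psi'(u)/\psi(u)=\epsilon(u)/u$ and a further differentiation gives $\psi''(u)/\psi'(u)=\epsilon(u)/u+\epsilon'(u)-1/u+\epsilon'(u)/\epsilon(u)\cdot(\dots)$, which by \eqref{3section103}--\eqref{3section1030} is $(\epsilon(u)/u)(1+o(1))=o(1)$ in the relevant sense; meanwhile $1/s(t)\sim\psi'(t)^{-1/2}=(\psi(t)\epsilon(t)/t)^{-1/2}\to0$ since $\psi(t)\to\infty$. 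In both cases the oscillation of the argument over $[t,t+A/s(t)]$ is controlled uniformly in $|u|\le A$, which yields the supremum version.

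The main obstacle I expect is the bookkeeping required to pass rigorously from the regularity hypotheses on $h$ (stated via $\epsilon$, $\epsilon'$, $\epsilon''$) to the needed one-sided control of $\psi''/\psi'$ near infinity, uniformly over the shrinking window $[t,t+A/s(t)]$ — in particular showing that on that whole window one still has $s^{2}(v)\sim\psi'(v)$ and that $\psi'$ itself does not oscillate (monotonicity or asymptotic monotonicity of $\psi'$, which one gets from $\psi\in\widetilde R_0$ or $\psi$ regularly varying plus the second-derivative bounds). A clean way to organise this is to first prove the auxiliary statement $\psi'(t+o(\sqrt{\psi'(t)})^{-1}\cdot\text{bounded})/\psi'(t)\to1$ as a lemma about functions in $\widetilde{R_0}$ and in the regularly varying class, and then simply invoke Theorem~\ref{order of s} to replace $s^{2}$ by $\psi'$ throughout; the remaining steps are the routine estimates sketched above. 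This is exactly the argument deferred to the Appendix in the statement.
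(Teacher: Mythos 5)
Your overall strategy matches the paper's: reduce $s^{2}$ to $\psi'$ via Theorem~\ref{order of s}, split into the cases $h\in R_{\beta}$ and $h\in R_{\infty}$, establish that the increment $u/s(t)$ is asymptotically negligible compared with $t$, and then exploit the (slow-/regular-) variation structure of $\psi'$. Your technical core is different, though: the paper works directly from the identities $\psi'(t)=1/h'(\psi(t))$ (for $R_{\beta}$) and $\psi'(t)=\psi(t)\epsilon(t)/t$ (for $R_{\infty}$) and compares $\psi'(t+u/s)$ with $\psi'(t)$ term-by-term, whereas you integrate the logarithmic derivative $\psi''/\psi'$ over $[t,t+u/s(t)]$ and bound the product with the window length. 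Your scheme is more unified across the two cases, at the price of needing one extra derivative of $\psi$; the paper's manipulation sticks to first derivatives of $\psi$ and to the $\epsilon$-representation, which is slightly more elementary and avoids having to justify differentiating an asymptotic equivalence.

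Two of your intermediate justifications are wrong, although neither is fatal to the argument. First, you claim $s(t)$ is eventually bounded below; this is false in general. The paper's Theorem~\ref{order of s} together with its own examples shows $s^{2}(t)\to 0$ for the Weibull case with $k>2$ (i.e.\ $h\in R_{k-1}$, $k>2$, so $\beta=k-1>1$) and $s^{2}(t)\sim 1/t\to 0$ in the rapidly varying example. What one actually needs, and what the paper proves, is the weaker fact $u/s(t)=o(t)$ uniformly for $u\in K$; that already guarantees $t+u/s(t)=t\bigl(1+o(1)\bigr)\to\infty$ and, crucially, controls the window length in your integral bound. Second, in the $R_{\infty}$ case you assert $1/s(t)\sim(\psi(t)\epsilon(t)/t)^{-1/2}\to 0$; that would require $\psi'(t)\to\infty$, which again fails for the rapidly varying example where $\psi'(t)=1/t\to 0$. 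The quantity that actually tends to zero, and which your argument needs, is the \emph{product} $\sup_{v}|\psi''(v)/\psi'(v)|\cdot(A/s(t))\sim\sqrt{\epsilon(t)/(t\psi(t))}$, and this indeed vanishes because $\epsilon$ is slowly varying while $t\psi(t)\to\infty$. Your $R_{\infty}$ expression for $\psi''/\psi'$ is also garbled (it should read $\psi''(u)/\psi'(u)=\epsilon(u)/u+\epsilon'(u)/\epsilon(u)-1/u$, which is $(\epsilon(u)/u)(1+o(1))$ under \eqref{3section103}), though you recover the right asymptotic in the end. Correcting these three spots would make your version a clean alternative to the proof in the Appendix.
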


Applying the above Lemma it follows that the normalized r.v's $\left(
\mathcal{X}_{t}-a_{n}\right)  /s$ converge to a standard normal variable
$N(0,1)$ in distribution, as $n\rightarrow\infty.$ This amount to say that
\begin{equation}
\mathcal{X}_{t}=a_{n}+sN(0,1)+o_{\Pi^{a_{n}}}(1).
\label{approxgaussienneasympt}%
\end{equation}
In the above formula (\ref{approxgaussienneasympt}) the remainder term
$o_{\Pi^{a_{n}}}(1)$ is of smaller order than $sN(0,1)$ since the variance of
a r.v. with distribution $\Pi^{a_{n}}$ is $s^{2}$. When $\lim_{t\rightarrow
\infty}s(t)=0$ then $\mathcal{X}_{t}$ concentrates around $a_{n}$ with rate
$s(t).$ Due to Theorem \ref{ThmcvVarTot} the same holds for $X_{1}$ under
$\left(  S_{1}^{n}=na_{n}\right)  .$This is indeed the case when $g$ is a
regularly varying function with index $\gamma$ larger than $2$, plus some
extra regularity conditions captured in the fact that $h$ belongs to
$R_{\gamma-1}.$ The gausssian tail corresponds to $\gamma=2$ and the variance
of the tilted\ distribution $\Pi^{a_{n}}$ has a non degenerate variance for
\ all $a_{n}$ (in the standard gaussian case, $\Pi^{a_{n}}=N\left(
a_{n},1\right)  $), as does the conditional distribution of $X_{1}$ given
$\left(  S_{1}^{n}=na_{n}\right)  .$

\bigskip

\subsection{Extension to other conditioning events}

Let $X_{1},..,X_{n}$ be $n$ i.i.d. r.v's with common density $p$ defined on
$\mathbb{R}^{d}$ and let $f$ denote a measurable function from $\mathbb{R}%
^{d}$ onto $\mathbb{R}$ such that $f(X_{1})$ has a density $p_{f}$ .We assume
that $p_{f}$ \ enjoys all properties stated in Section \ref{SctNotationHyp} ,
i.e. $p_{f}(x)=\exp-\left(  g(x)-q(x)\right)  $ and we denote accordingly
$\phi_{f}(t)$ its moment generating function, and $m_{f}(t)$ and $s_{f}%
^{2}(t)$ the corresponding first and second derivatives of $\log\phi_{f}(t).$
The following extension of Theorem \ref{ThmLocalProba} holds. Denote
$\Sigma_{1}^{n}:=f(X_{1})+..+f(X_{n}).$

Denote for all $a$ in the range of $f$
\begin{equation}
\pi_{f}^{a}(x):=\frac{\exp tf(x)}{\phi_{f}(t)}p(x) \label{Pif}%
\end{equation}
where $t$ is the unique solution of $m(t):=\left(  d/dt\right)  \log\phi
_{f}(t)=a$ and $\Pi_{f}^{a}$ the corresponding probability measure$.$ Denote
$P_{f,a_{n}}$ the conditional distribution of $\left(  X_{1},..,X_{n}\right)
$ given $\left(  \Sigma_{1}^{n}=na_{n}\right)  .$ The sequence $a_{n}$ is
assume to satisfy (\ref{croissance de a}) where $\psi$ is defined with respect
to $p_{f}.$

\begin{thm}
\label{thmSousContrLin} Under the current hypotheses of this section\ the
variation distance between the margin of $P_{f,a_{n}}$ and $\Pi_{f}^{a_{n}}$
goes to $0$ as $n$ tends to infinity.
\end{thm}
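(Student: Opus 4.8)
The plan is to reduce Theorem~\ref{thmSousContrLin} to Theorem~\ref{ThmcvVarTot} by a change of variables, observing that everything in the one-dimensional theory depended on $X_1,\dots,X_n$ only through the scalar summands entering $S_1^n$. Here the role of the scalar summand $X_i$ is played by $f(X_i)$, and the role of $P_X$ is played by $P_{f(X)}$, the law of $f(X_1)$, which by hypothesis has a density $p_f$ satisfying all of Section~\ref{SctNotationHyp} with associated $\phi_f$, $m_f$, $s_f^2$, $\mu_{3,f}$ and inverse function $\psi_f$. First I would record the tilting/invariance identity in this setting: for the i.i.d.\ vector $\bigl(f(X_1),\dots,f(X_n)\bigr)$ with sum $\Sigma_1^n$, and for any $\alpha$ in the range of $f$, the conditional law of $f(X_1)$ given $(\Sigma_1^n=na_n)$ is unchanged if the $X_i$ are sampled under $\pi_f^\alpha$ instead of $p$, because the tilt $\exp(tf(x))/\phi_f(t)$ depends on $x$ only through $f(x)$ and hence factors through $\Sigma_1^n$. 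This is exactly the invariance property used in the proof of Theorem~\ref{point conditional density}, now applied to the pushforward measures.

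The key steps, in order, are: (i) push forward everything by $f$, so that $\Sigma_1^n=f(X_1)+\dots+f(X_n)$ is literally a sum of i.i.d.\ real random variables $W_i:=f(X_i)$ with common density $p_f\in$ the admissible class and $h_f\in\mathfrak{R}$; (ii) apply Theorem~\ref{order of s}, Corollary~\ref{3cor1}, Theorem~\ref{3theorem1} and Lemmas~\ref{3lemma z}, \ref{lemmaSselfneglecting} verbatim to the $W_i$, which is legitimate since those results only use the hypotheses on the density of the summand and the growth condition (\ref{croissance de a}) stated with respect to $\psi_f$; (iii) run the argument of Theorem~\ref{point conditional density} and Theorem~\ref{ThmLocalProba} to obtain, for $Y_1:=f(X_1)$ sampled under $P_{f,a_n}$,
\[
p_{f,a_n}(Y_1)=\pi_f^{a_n}(W=Y_1)\bigl(1+R_n\bigr),\qquad R_n=o_{P_{f,a_n}}(1/\sqrt n),
\]
where $p_{f,a_n}$ is the conditional density of $f(X_1)$ given $(\Sigma_1^n=na_n)$ and $\pi_f^{a_n}(W=\cdot)$ is the density of the tilted law of $f(X_1)$; (iv) invoke Lemma~\ref{Lemma:commute_from_p_n_to_g_n} with $\mathfrak{r}_n$ the marginal density of $P_{f,a_n}$ (in the $f(X_1)$-coordinate) and $\mathfrak{s}_n$ that of $\Pi_f^{a_n}$ to transfer the relative-error bound to sampling under $\Pi_f^{a_n}$; (v) conclude via the $E_\delta$-splitting argument of Theorem~\ref{ThmcvVarTot} and Scheff\'e's Lemma that the total variation distance between the law of $f(X_1)$ under $P_{f,a_n}$ and $\Pi_f^{a_n}$ (in the $f$-coordinate) tends to $0$. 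Finally I would note that since $\pi_f^{a_n}(x)=\exp(tf(x))p(x)/\phi_f(t)$ is itself a density on $\mathbb{R}^d$ and the conditioning event $(\Sigma_1^n=na_n)$ is $\sigma\bigl(f(X_1),\dots,f(X_n)\bigr)$-measurable, the whole computation can equally be read directly at the level of densities on $\mathbb{R}^d$: the conditional density of $X_1$ given $(\Sigma_1^n=na_n)$ factors as $p(x)$ times the conditional density of $(\Sigma_1^n=na_n)$-type Gaussian-corrected factor depending only on $f(x)$, and the Edgeworth machinery controls that factor. Either route gives the variation-norm statement for the margin of $P_{f,a_n}$ versus $\Pi_f^{a_n}$.

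The one genuinely new point — and the main obstacle — is step (i)–(ii): one must check that $p_f$, the \emph{density of $f(X_1)$}, really does satisfy (\ref{densityFunction}) with an $h_f=g_f'$ lying in $\mathfrak{R}$, and that the growth condition (\ref{croissance de a}) holds with $\psi_f=h_f^{\leftarrow}$; this is an assumption imposed at the start of the subsection ("we assume that $p_f$ enjoys all properties stated in Section~\ref{SctNotationHyp}"), so strictly it is hypothesis-checking rather than proof, but it is the place where the $d$-dimensional nature of $X_1$ and the choice of $f$ actually matter. Everything downstream of that is a word-for-word replay of the scalar proofs with $X_i\rightsquigarrow f(X_i)$, $\phi\rightsquigarrow\phi_f$, $\psi\rightsquigarrow\psi_f$, $\pi^{a_n}\rightsquigarrow\pi_f^{a_n}$; in particular Lemma~\ref{Lemma:commute_from_p_n_to_g_n} is stated abstractly for arbitrary pairs of product-type measures on $\mathbb{R}^n$ and applies unchanged. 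I would therefore present the proof as: "push forward by $f$; all hypotheses of Section~\ref{SctNotationHyp} hold for the summand $f(X_1)$ by assumption; now the proofs of Theorems~\ref{ThmLocalProba} and \ref{ThmcvVarTot} apply \emph{mutatis mutandis}", spelling out only the tilting identity and the final Scheff\'e step for completeness.
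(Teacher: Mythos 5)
Your approach is essentially the same as the paper's: the paper writes the $\mathbb{R}^d$ density identity
\[
p\bigl(X_{1}=y_1\mid\Sigma_{1}^{n}=na_{n}\bigr)=p(y_1)\,\frac{e^{tf(y_1)}}{\phi_{f}(t)}\cdot\frac{\pi_{f}^{m}\bigl(\Sigma_{2}^{n}=na_{n}-f(y_1)\bigr)}{\pi_{f}^{m}\bigl(\Sigma_{1}^{n}=na_{n}\bigr)}=\pi_f^{a_n}(y_1)\cdot(\text{scalar ratio depending only on }f(y_1)),
\]
and then invokes the scalar proof of Theorem~\ref{ThmLocalProba} verbatim with $X_i\rightsquigarrow f(X_i)$, exactly the reduction you propose. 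One caution on presentation: your primary route (i)--(v) as literally written proves variation-norm convergence for the law of $f(X_1)$ on $\mathbb{R}$, which is strictly weaker than the theorem (whose objects live on $\mathbb{R}^d$); the step that closes the gap is your closing remark that the ratio $p_{X_1\mid\Sigma_1^n=na_n}/\pi_f^{a_n}$ factors through $f$, so that for any Borel $C\subset\mathbb{R}^d$ the discrepancy $|P_{f,a_n}(X_1\in C)-\Pi_f^{a_n}(C)|$ is bounded by the $\mathbb{R}$-variation distance of the pushforwards, and this is where the paper is more explicit by starting from the $\mathbb{R}^d$ identity rather than appending the lift at the end.
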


Proof: It holds , for $Y_{1}$ a r.v. with density $p,$%
\begin{align*}
p\left(  \left.  X_{1}=Y_{1}\right\vert \Sigma_{1}^{n}=na_{n}\right)   &
=p\left(  X_{1}=Y_{1}\right)  \frac{p\left(  \Sigma_{2}^{n}=na_{n}%
-f(Y_{1})\right)  }{p\left(  \Sigma_{1}^{n}=na_{n}\right)  }\\
&  =\frac{p\left(  X_{1}=Y_{1}\right)  }{p\left(  f(X_{1})=f(Y_{1})\right)
}p\left(  f(X_{1})=f(Y_{1})\right)  \frac{p\left(  \Sigma_{2}^{n}%
=na_{n}-f(Y_{1})\right)  }{p\left(  \Sigma_{1}^{n}=na_{n}\right)  }\\
&  =\frac{p\left(  X_{1}=Y_{1}\right)  }{p\left(  f(X_{1})=f(Y_{1})\right)
}\pi_{f}^{m}\left(  f(X_{1})=f(Y_{1})\right)  \frac{\pi_{f}^{m}\left(
\Sigma_{2}^{n}=na_{n}-f(Y_{1})\right)  }{\pi_{f}^{m}\left(  \Sigma_{1}%
^{n}=na_{n}\right)  }\\
&  =p\left(  X_{1}=Y_{1}\right)  \frac{e^{tf(Y_{1})}}{\phi_{f}(t)}\frac
{\pi_{f}^{m}\left(  \Sigma_{2}^{n}=na_{n}-f(Y_{1})\right)  }{\pi_{f}%
^{m}\left(  \Sigma_{1}^{n}=na_{n}\right)  }%
\end{align*}
where $m:=\left(  d/dt\right)  \log\phi_{f}(t)=a_{n}.$The proof then follows
verbatim that of Theorem \ref{ThmLocalProba} with $X_{i}$ substituted by
$f(X_{i}).$\bigskip

\subsection{Differences between Gibbs principle under LDP and under extreme
deviation}

It is of interest to confront the present results with the general form of the
Gibbs principle under linear constraints in the LDP range.

Consider the application of the above result to r.v's $Y_{1},..,Y_{n}$ with
$Y_{i}:=\left(  X_{i}\right)  ^{2}$ \ where the $X_{i}$'s are i.i.d. and are
such that the density of the i.i.d. r.v's $Y_{i}$'s satisfy
(\ref{densityFunction}) with all the hypotheses stated in this paper, assuming
that $Y$ has a Weibull distribution with parameter larger than 2 . By the
Gibbs conditional principle under a point conditioning (see e.g.
\cite{Diaconis1}), for \textit{fixed} $a$, conditionally on $\left(
\sum_{i=1}^{n}Y_{i}=na\right)  $ the generic r.v. $Y_{1}$ has a non degenerate
limit distribution
\[
p_{Y}^{\ast}(y):=\frac{\exp ty}{E\exp tY_{1}}p_{Y}(y)
\]
and the limit density of $X_{1}$ under $\left(  \sum_{i=1}^{n}X_{i}%
^{2}=na\right)  $ is%

\[
p_{X}^{\ast}(x):=\frac{\exp tx^{2}}{E\exp tX_{1}^{2}}p_{X}(x)
\]
a non degenerate distribution, with $m_{Y}(t)=a$ . As a consequence of the
above result, when $a_{n}\rightarrow\infty$ the distribution of $X_{1}$ under
the condition $\left(  \sum_{i=1}^{n}X_{i}^{2}=na_{n}\right)  $ concentrates
sharply at $-\sqrt{a_{n}}$ and $+\sqrt{a_{n}}.$

\section{EDP under exceedances}

\label{SectExceedances}

\subsection{DLP and EDP}

\label{EDPandLDP}This section extends the previous Theorem \ref{ThmLocalProba}
when the conditioning event has non null measure and writes $A_{n}=\left(
S_{1}^{n}\geq na_{n}\right)  .$ It also provides a bridge between the
"democratic localization principle (DLP)" (\ref{democracy}) and the
conditional Gibbs principle. We denote the density of $X_{1}$ given $A_{n}$ by
$p_{A_{n}}$ to differentiate it from $p_{a_{n}}$.

In the LDP case, when $a_{n}=a>EX_{1}$ then the distribution of $X_{1}$ given
$\left(  S_{1}^{n}\geq na\right)  $ or given $\left(  S_{1}^{n}=na\right)  $
coincide asymptotically, both converging to the tilted distribution at point
$a$, the dominating point of $\left[  a,\infty\right)  .$ This result follows
from the local approach derived in \cite{Diaconis1}, which uses limit results
for sums of i.i.d. r.v's, and from Sanov Theorem which amounts to identify the
Kullback-Leibler projection of the p.m. $P$ on the set of all p.m's with
expectation larger or equal $a.$ When $a$ is allowed to tend to infinity with
$n$ no Sanov-type result is available presently, and the first approach,
although cumbersome, has to be used.\ The "dominating point property" still holds.

Most applications of extreme deviation principles deal with phenomenons driven
by multiplicative cascade processes with power laws; see e.g. \cite{Sornette}
for fragmentation models and turbulence. For sake of simplicity we restrict to
this case, assuming therefore that $h$ belongs to $R_{k-1}$ for some $k>1.$
All conditions and notation of Section \ref{SctNotationHyp} are assumed to
hold. Furthermore, since we consider the asymptotics of the marginal
distribution of the sample under $A_{n}$, it is natural to assume that $a_{n}$
is such that the democratic localization principle holds together with the
local Gibbs conditional principle.

Specialized to the present setting it holds (see Theorem 6 in \cite{BoniaCao})

\begin{thm}
\label{thmLocalization} Assume that for some $\delta>0$
\begin{equation}
\lim\inf_{n\rightarrow\infty}\frac{\log g(a_{n})}{\log n}>\delta
\label{condition31}%
\end{equation}
and define a sequence $\epsilon_{n}$ such that
\begin{equation}
\lim_{n\rightarrow\infty}\frac{n\log a_{n}}{a_{n}^{k-2}\epsilon_{n}^{2}}=0.
\label{example3}%
\end{equation}
Then
\begin{equation}
\lim_{n\rightarrow\infty}P\left(  \left.
{\displaystyle\bigcap\limits_{i=1}^{n}}
\left(  a_{n}-\epsilon_{n}<X_{i}<a_{n}+\epsilon_{n}\right)  \right\vert
S_{1}^{n}/n\geq a_{n}\right)  =1. \label{PRINCIPEDEMO}%
\end{equation}
Furthermore, when $1<k\leq2$ then $\epsilon_{n}$ can be chosen to satisfy
$\epsilon_{n}/a_{n}\rightarrow0$, and when $k>2$, $\epsilon_{n}$ can be chosen
to satisfy $\epsilon_{n}\rightarrow0.$
\end{thm}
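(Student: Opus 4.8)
\textbf{Plan of proof for Theorem \ref{thmLocalization}.}

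The result is quoted from \cite{BoniaCao} (Theorem 6 there), so the plan is essentially to recall how a localization statement of this type is established and then to check that conditions \eqref{condition31} and \eqref{example3} are exactly what is needed. The overall strategy is a union bound on the complement of the localization event, reducing everything to a one-summand estimate: writing $B_{n}:=\bigcup_{i=1}^{n}\{|X_{i}-a_{n}|\geq\epsilon_{n}\}$, it suffices by symmetry to bound $n\,P(|X_{1}-a_{n}|\geq\epsilon_{n}\mid S_{1}^{n}\geq na_{n})$ and show it tends to $0$. First I would split the conditioning event $\{S_{1}^{n}\geq na_{n}\}$ into the ``exact'' slices $\{S_{1}^{n}=nu\}$ for $u\geq a_{n}$ (equivalently integrate against the conditional law), and use the identity, valid under the tilted sampling at the appropriate point,
\[
P\big(X_{1}=x\,\big|\,S_{1}^{n}=nu\big)=\pi^{u}(X_{1}=x)\,\frac{\widetilde{\pi_{n-1}}\big((nu-x-(n-1)u)/(s\sqrt{n-1})\big)}{\widetilde{\pi_{n}}(0)},
\]
which is the same decomposition used in the proof of Theorem \ref{point conditional density}. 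The tilted density $\pi^{u}$ concentrates around $u$ at scale $s(t_{u})$ with $m(t_{u})=u$, and in the Weibull regime $h\in R_{k-1}$ Theorem \ref{order of s} gives $s^{2}(t_{u})\asymp u^{(2-k)/(k-1)}$, so the natural fluctuation scale of a single summand is much smaller than $\epsilon_{n}$ as soon as $\epsilon_{n}$ dominates it.

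Next I would handle the two competing effects that force the two displayed hypotheses. Condition \eqref{example3}, namely $n\log a_{n}/(a_{n}^{k-2}\epsilon_{n}^{2})\to0$, is precisely a Gaussian (or Bernstein) tail bound for $X_{1}$ under $\pi^{u}$ at deviation $\epsilon_{n}$: since the variance is of order $u^{(2-k)/(k-1)}$, one has $\pi^{u}(|X_{1}-u|\geq\epsilon_{n})\lesssim\exp(-c\,\epsilon_{n}^{2}u^{k-2})$ for $u$ of order $a_{n}$ (using the self-neglecting property of $s$, Lemma \ref{lemmaSselfneglecting}, to control the moment generating function on the relevant range), and multiplying by the union-bound factor $n$ and absorbing the polynomial loss from the normalizing constant $\widetilde{\pi_{n}}(0)^{-1}$ gives a bound of order $n\exp(-c\,\epsilon_{n}^{2}a_{n}^{k-2})$, which tends to $0$ exactly under \eqref{example3}. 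Condition \eqref{condition31}, $\liminf\log g(a_{n})/\log n>\delta$, is what guarantees that the conditioning event $\{S_{1}^{n}\geq na_{n}\}$ is genuinely an extreme-deviation event for which the local limit machinery of Section \ref{SectAbelEdgeworth} applies: it ensures $a_{n}\to\infty$ fast enough that the Edgeworth/Abelian estimates hold uniformly over the slices $u\geq a_{n}$ that carry non-negligible conditional mass, and it rules out the conditional law putting mass on $u$ much larger than $a_{n}$ (a dominating-point phenomenon, to be made precise in the next subsection). I would also verify that the growth condition \eqref{croissance de a} needed to invoke Theorem \ref{ThmLocalProba} is implied, in this $R_{k-1}$ setting, by \eqref{condition31} together with \eqref{example3}.

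Finally, the closing assertions on the range of $\epsilon_{n}$ are a direct arithmetic check on \eqref{example3}: solving $n\log a_{n}=o(a_{n}^{k-2}\epsilon_{n}^{2})$ for $\epsilon_{n}$ shows one needs $\epsilon_{n}^{2}\gg n\log a_{n}/a_{n}^{k-2}$; when $1<k\leq 2$ the right-hand side still allows $\epsilon_{n}=o(a_{n})$ (one checks $n\log a_{n}/a_{n}^{k}\to0$ is compatible, given the lower bound on $a_{n}$ from \eqref{condition31}), whereas when $k>2$ the factor $a_{n}^{k-2}$ in the denominator is large enough that $\epsilon_{n}$ may be taken to vanish outright. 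The main obstacle I anticipate is the uniformity of the local central limit / Edgeworth control over the continuum of conditioning slices $u\in[a_{n},\infty)$ simultaneously: Theorem \ref{3theorem1} and Lemma \ref{lemmaSselfneglecting} are stated for a fixed sequence $a_{n}$, and here one must re-run them with $a_{n}$ replaced by a varying $u\geq a_{n}$ while keeping the error terms $o(1/\sqrt{n})$ honest, then integrate against the conditional distribution of $S_{1}^{n}/n$ on that half-line — showing that this distribution concentrates near $a_{n}$ (the dominating-point property) is the step that genuinely uses the power-law structure $h\in R_{k-1}$ and is where I would spend most of the effort, referring to \cite{BoniaCao} for the detailed estimates.
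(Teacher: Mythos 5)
This theorem is not proved in the paper: it is quoted directly as Theorem 6 of \cite{BoniaCao}, so there is no in-paper argument against which to compare your plan. Taken on its own, your outline --- union bound to reduce to a one-summand estimate, slicing the conditioning event along $\{S_1^n = nu\}$ for $u\geq a_n$, the tilted-sampling identity at $u$ together with Gaussian concentration of $\pi^u$ at scale $s(t_u)$, and uniform Edgeworth control over the slices --- is the natural route, and your reading of what \eqref{example3} and \eqref{condition31} are doing is essentially correct.

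Two slips are worth flagging. First, you write $s^2(t_u)\asymp u^{(2-k)/(k-1)}$; that is the scaling in the tilting \emph{parameter} $t$, not in the mean $u=m(t)$. Since $t_u\sim h(u)\asymp u^{k-1}$ when $h\in R_{k-1}$, the correct statement is $s^2(t_u)\asymp u^{2-k}$, which is what your subsequent bound $\pi^u(|X_1-u|\geq\epsilon_n)\lesssim\exp(-c\,\epsilon_n^2 u^{k-2})$ tacitly uses, so the final exponent is right but the intermediate line is not. Second, your arithmetic for the closing clause is too quick: \eqref{condition31} gives roughly $a_n^{k}\gtrsim n^{\delta}$ for some $\delta>0$, and this does \emph{not} by itself imply $n\log a_n/a_n^{k}\to 0$, which is exactly what $\epsilon_n=o(a_n)$ combined with \eqref{example3} forces when $1<k\leq 2$; so the ``$\epsilon_n$ can be chosen'' clause is a compatibility statement that holds only when $a_n$ grows fast enough, not for every sequence satisfying \eqref{condition31}, and your sketch glosses over this. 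Finally, the Gaussian tail bound for $\pi^u$ needs separate treatment far below $u$, where the tilted density decays like $e^{tx}$ rather than quadratically; together with the uniformity-over-$u$ issue you already flagged, this is precisely the technical content the paper offloads to the external reference.
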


We assume that both conditions (\ref{condition31}) and (\ref{croissance de a})
hold. Therefore we assume that the range of $a_{n}$ makes both the DLP and EDP
hold. This is the case for example when%
\[
a_{n}=n^{\alpha}%
\]
together with
\[
\alpha<\frac{2}{2+k}.
\]

We state the following extension of Theorems \ref{thmLocalization} and
\ref{ThmcvVarTot}.

\begin{thm}
\label{thmExceedances}Assume (\ref{condition31}) and (\ref{example3}). Then
for any family of Borel sets $B_{n}$ such that
\[
\lim\inf_{n\rightarrow\infty}P_{A_{n}}\left(  B_{n}\right)  >0
\]
it holds%
\[
P_{A_{n}}\left(  B_{n}\right)  =(1+o(1))G_{a_{n}}\left(  B_{n}\right)
\]
as $n\rightarrow\infty.$
\end{thm}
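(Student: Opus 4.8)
The plan is to reduce the statement about the exceedance event $A_{n}=(S_{1}^{n}\geq na_{n})$ to the already-established point-conditioning result, Theorem \ref{ThmcvVarTot} (equivalently, the probabilistic approximation of Theorem \ref{ThmLocalProba}), by slicing the half-line $[a_{n},\infty)$ into thin layers. First I would write, for any Borel set $B$,
\[
P_{A_{n}}(B)=P\bigl(X_{1}\in B\mid S_{1}^{n}\geq na_{n}\bigr)=\frac{\int_{a_{n}}^{\infty}P\bigl(X_{1}\in B\mid S_{1}^{n}=nu\bigr)\,dP_{S_{1}^{n}/n}(u)}{P\bigl(S_{1}^{n}/n\geq a_{n}\bigr)},
\]
so that $P_{A_{n}}$ is a mixture of the point-conditioned marginals $p_{u}$ over $u\in[a_{n},\infty)$ with mixing measure the (normalized) law of $S_{1}^{n}/n$ restricted to that half-line. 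The key point is that this mixing measure concentrates, as $n\to\infty$, on values $u$ infinitesimally close to $a_{n}$: this is exactly the content of the democratic localization principle (Theorem \ref{thmLocalization}), since $(\bigcap_i\{|X_i-a_n|<\epsilon_n\})$ forces $|S_1^n/n-a_n|<\epsilon_n$ with conditional probability tending to $1$ under $A_n$. Hence up to a vanishing error one may replace the mixture by its behaviour on the slice $u\in[a_{n},a_{n}+\epsilon_{n}]$, which is the ``dominating point'' $a_{n}$.

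The second step is to control the point-conditioned marginals $p_{u}$ uniformly over that slice. For each such $u$ Theorem \ref{ThmLocalProba} (or its variation-norm consequence, Theorem \ref{ThmcvVarTot}) gives $p_{u}\approx\pi^{u}$; what is needed in addition is that $\pi^{u}$ and $\pi^{a_{n}}=g_{a_{n}}$ are close in variation norm, uniformly for $u\in[a_{n},a_{n}+\epsilon_{n}]$. Since $\pi^{u}(x)=e^{t(u)x}p(x)/\phi(t(u))$ and $u\mapsto t(u)$ is smooth with $t(u)-t(a_{n})=O(\epsilon_{n}/s^{2})$ (because $m'=s^2$), the ratio $\pi^{u}(x)/g_{a_{n}}(x)$ is $1+o(1)$ on the bulk of $g_{a_{n}}$, i.e. on $x=a_{n}+O(s)$; one checks that $\epsilon_{n}/s^{2}\cdot(a_{n}+O(s))=o(1)$ follows from the chosen growth of $\epsilon_{n}$ in (\ref{example3}) together with $s^{2}(t)\sim\psi'(t)$ and the Weibull-type asymptotics $s^{2}\sim C a_{n}^{2-k}$. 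Combining, one obtains $p_{u}=g_{a_{n}}(1+o(1))$ in variation norm uniformly over the relevant slice, hence, integrating against the mixing measure, $P_{A_{n}}(B_{n})=(1+o(1))G_{a_{n}}(B_{n})$ whenever $\liminf P_{A_{n}}(B_{n})>0$ (the positivity hypothesis is what allows the relative $o(1)$ error to be absorbed rather than merely an additive one).

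Concretely the steps are: (i) express $P_{A_{n}}$ as the above mixture of $p_{u}$'s; (ii) invoke Theorem \ref{thmLocalization} to restrict the mixing measure to $u\in[a_{n},a_{n}+\epsilon_{n}]$ at the cost of a vanishing additive error; (iii) apply Theorem \ref{ThmcvVarTot} pointwise, i.e. $p_{u}=\pi^{u}(1+o_{P_u}(1))$, and check the error is uniform over the slice (this uses that the growth condition (\ref{croissance de a}) holds not just at $a_{n}$ but on $[a_{n},a_{n}+\epsilon_{n}]$, which is immediate since $\epsilon_n/a_n\to0$); (iv) show $\pi^{u}$ and $g_{a_{n}}$ are variation-norm equivalent on that slice via the smoothness of $t(\cdot)$ and the order estimates of Theorem \ref{order of s}; (v) reassemble the mixture and use $\liminf P_{A_{n}}(B_{n})>0$ to convert the combined error into a relative $(1+o(1))$ factor.

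I expect the main obstacle to be step (iv)–(v): making the approximation $p_{u}\approx g_{a_{n}}$ genuinely \emph{uniform} in $u$ across the layer $[a_{n},a_{n}+\epsilon_{n}]$ while simultaneously tracking that the layer width $\epsilon_{n}$ permitted by the DLP (condition (\ref{example3})) is compatible with the EDP growth condition (\ref{croissance de a})—this is precisely why the theorem is stated only under the power-law (Weibull, $h\in R_{k-1}$) restriction, where $s^{2}(t)\sim C a_{n}^{2-k}$ and $\psi,\psi'$ have explicit polynomial order, so that the two constraints can be reconciled for the sample ranges $a_{n}=n^{\alpha}$ with $\alpha<2/(2+k)$ singled out before the statement. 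The ``dominating point'' phenomenon itself (step (ii)) is essentially a restatement of Theorem \ref{thmLocalization} and should go through with little friction.
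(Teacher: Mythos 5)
Your overall strategy is the one the paper follows: decompose $P_{A_n}$ as a mixture $\int P_v(\cdot)\,d\mu_n(v)$ of the point-conditioned laws $P_v$, use the DLP (Theorem \ref{thmLocalization}) to localize the mixing measure $\mu_n$ to $[a_n,a_n+\epsilon_n]$, apply Theorem \ref{ThmcvVarTot} to replace $P_v$ by $\Pi^v$, and then argue that the tilted measures across the slice can be replaced by $G_{a_n}$. The paper also invokes Lemma \ref{JensenLemme} (Jensen-type tail estimate) to control the residual mass of $\mu_n$ beyond $a_n+\epsilon_n$, and then collapses the slice integral via a mean-value argument to a single $P_{a_n+\theta_n\epsilon_n}$, after which it compares $G_{a_n+\theta_n\epsilon_n}$ with $G_{a_n}$ using the asymptotics $s^2(t)\sim C\,m(t)^{2-k}$.

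The difficulty you flagged in step (iv) is, however, not a matter of making a correct estimate uniform; the estimate itself is false. The log-ratio of tilted densities is
\[
\log\frac{\pi^{u}(x)}{\pi^{a_n}(x)}=(t_u-t_{a_n})\,x-\bigl(\log\phi(t_u)-\log\phi(t_{a_n})\bigr)=(t_u-t_{a_n})\,(x-a_n)-\tfrac12 s^2\,(t_u-t_{a_n})^2+\dots,
\]
so the $a_n$ contribution cancels against the $\log\phi$ term; what remains on the bulk $|x-a_n|=O(s)$ is of order $(t_u-t_{a_n})\,s\sim (u-a_n)/s$, not $(u-a_n)\,a_n/s^2$. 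Crucially this does not tend to zero over the whole DLP slice: condition (\ref{example3}) forces $\epsilon_n\gg\sqrt{n\log a_n}\;a_n^{(2-k)/2}\sim\sqrt{n\log a_n}\;s$, so $\epsilon_n/s\to\infty$. The DLP window $\epsilon_n$ is intrinsically much wider than the standard deviation $s$ of the tilted laws, and for $u$ near $a_n+\epsilon_n$ the measures $\Pi^u$ and $\Pi^{a_n}$ are essentially singular. Hence the uniform variation-norm equivalence of $\pi^{u}$ and $g_{a_n}$ over $u\in[a_n,a_n+\epsilon_n]$, which your step (iv) requires, cannot hold, and step (v) cannot be salvaged by mere bookkeeping.

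What actually closes the argument is a concentration of the mixing measure $\mu_n$ far sharper than the DLP provides. By Lemma \ref{JensenLemme}, $P(S_1^n/n\ge v)$ decays like $\exp(-nI(v))$ with $I'(v)=t_n\sim h(a_n)$, so the density of $\mu_n$ on $[a_n,a_n+\epsilon_n]$ behaves like $\exp(-n\,t_n\,(v-a_n))$: the mixing measure lives on a window of width $O\bigl(1/(n\,t_n)\bigr)$, which in the Weibull case is $1/(n a_n^{k-1})$ and is $o(s)$ since $1/(n t_n s)\sim 1/(n a_n^{k/2})\to 0$. Only after restricting to this much narrower layer does the ratio $\pi^u/g_{a_n}$ become $1+o(1)$ uniformly, because then $(u-a_n)/s=o(1)$. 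In short, the DLP is used to truncate the tail of the mixture, but it is the exponential decay from Lemma \ref{JensenLemme} that controls the bulk; your proposal uses the DLP for both roles, which is why step (iv) breaks. You should also note that the paper's own mean-value step and its final comparison of $G_{a_n+\theta_n\epsilon_n}$ with $G_{a_n}$ (via $s(t)\sim s(t')$) lean on precisely this hidden concentration of $\mu_n$ — $s(t)\sim s(t')$ alone would not suffice if the location shift $\theta_n\epsilon_n$ were genuinely of order $\epsilon_n\gg s$ — so that is the single ingredient any complete version of the argument must make explicit.
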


\begin{rem}
\bigskip This Theorem is of the same kind as those related to conditioning on
thin sets,\ in the range of the LDP; see \cite{CattGozl}.\
\end{rem}

Proof of Theorem \ref{thmExceedances}:

For the purpose of the proof, we need the following lemma, based on Theorem
$6.2.1$ of Jensen \cite{Jensen}, in order to provide the asymptotic estimation
of the tail probability $P(S_{1}^{n}\geq na_{n})$. Its proof is differed to
the Appendix.

Define
\begin{align}
\label{3virg01000}I(x):=xm^{-1}(x)-\log\Phi\big(m^{-1}(x)\big).
\end{align}

\bigskip and let $t_{n}$ be defined through
\[
m(t_{n})=a_{n}.
\]

\begin{lem}
\label{JensenLemme} Let $X_{1},...,X_{n}$ be i.i.d. random variables with
density $p$ defined in $(\ref{densityFunction})$ and $h\in\mathfrak{R}$.
Suppose that when $n\rightarrow\infty$, it holds
\[
\frac{\psi(t_{n})^{2}}{\sqrt{n}\psi^{\prime}(t_{n})}\longrightarrow0.
\]
Then it holds
\begin{equation}
P(S_{1}^{n}\geq na_{n})=\frac{\exp(-nI(a_{n}))}{\sqrt{2\pi}\sqrt{n}%
t_{n}s(t_{n})}\Big(1+o\big(\frac{1}{\sqrt{n}}\big)\Big). \label{jensen}%
\end{equation}

\end{lem}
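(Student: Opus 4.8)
\textbf{Plan of proof for Lemma \ref{JensenLemme}.}

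The statement is a sharp asymptotic for the tail probability $P(S_{1}^{n}\geq na_{n})$ in the extreme-deviation regime, and the natural route is the exponential change of measure combined with the local Edgeworth expansion already available in this paper. The plan is to tilt the distribution of the $X_i$'s by the parameter $t_n$ determined by $m(t_n)=a_n$, so that under the tilted law $\Pi^{a_n}$ the sum $S_1^n$ has mean exactly $na_n$ and variance $ns^2(t_n)$. Writing the tail probability as an integral against the tilted density, one obtains the classical identity
\[
P(S_{1}^{n}\geq na_{n})=e^{-nI(a_{n})}\,E_{\Pi^{a_n}}\!\left[e^{-t_n(S_1^n-na_n)}\,\mathbf{1}_{\{S_1^n\geq na_n\}}\right],
\]
where $I$ is the Legendre transform defined in (\ref{3virg01000}); note $nI(a_n)=na_nt_n-n\log\Phi(t_n)$ is exactly the exponent coming out of the tilting. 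The remaining task is to show that the expectation on the right is asymptotically $\big(\sqrt{2\pi n}\,t_n s(t_n)\big)^{-1}(1+o(1/\sqrt n))$.

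For that, substitute $S_1^n=na_n+s(t_n)\sqrt{n}\,W_n$, where $W_n$ is the normalized tilted sum whose density is $\rho_n$ in the notation of Section \ref{SectAbelEdgeworth}. The expectation becomes
\[
\frac{1}{t_n s(t_n)\sqrt{n}}\int_{0}^{\infty} e^{-u}\,\rho_n\!\Big(\frac{u}{t_n s(t_n)\sqrt n}\Big)\,du,
\]
after the change of variable $u=t_n s(t_n)\sqrt n\,w$. Now invoke Theorem \ref{3theorem1}: uniformly in its argument, $\rho_n(x)=\phi(x)(1+\tfrac{\mu_3}{6\sqrt n s^3}(x^3-3x))+o(1/\sqrt n)$. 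The hypothesis $\psi(t_n)^2/(\sqrt n\,\psi'(t_n))\to0$ together with Theorem \ref{order of s} ($m\sim\psi$, $s^2\sim\psi'$) gives that $t_n s(t_n)\sqrt n\to\infty$ — indeed $t_n\to\infty$ and $s(t_n)\sqrt n\to\infty$ follow from the growth condition and steepness — so the argument $u/(t_n s(t_n)\sqrt n)$ stays in a shrinking neighbourhood of $0$ on the effective range of $u$. Hence $\rho_n$ evaluated there equals $\phi(0)(1+o(1/\sqrt n))=\tfrac{1}{\sqrt{2\pi}}(1+o(1/\sqrt n))$, and $\int_0^\infty e^{-u}\,du=1$ pulls out cleanly, yielding the claimed constant $1/(\sqrt{2\pi}\sqrt n\,t_n s(t_n))$ with relative error $o(1/\sqrt n)$. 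One must also check, via Corollary \ref{3cor1} ($\mu_3/s^3\to0$), that the Edgeworth correction term contributes only $o(1/\sqrt n)$ after integration against $e^{-u}$.

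The main obstacle is the uniformity and the control of the tail of the $u$-integral: Theorem \ref{3theorem1} is a \emph{local} expansion valid uniformly in $x$, but one needs to be sure that the contribution of large $u$ (equivalently, moderately large $w$, where the Edgeworth expansion degrades) is negligible. This is handled by splitting the integral at $u=\log n$ (say): on $u\leq \log n$ the argument of $\rho_n$ is $O(\log n/(t_n s(t_n)\sqrt n))=o(1)$ and the expansion applies with the stated error, while on $u>\log n$ the factor $e^{-u}<1/n$ together with the boundedness of $\rho_n$ (uniform in $n$, which follows from the Edgeworth expansion itself or from a direct bound on the tilted density) makes the remainder $O(1/n)=o(1/\sqrt n)$. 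A secondary technical point is justifying that $t_n s(t_n)\sqrt n\to\infty$ in all cases covered by $h\in\mathfrak{R}$; for $R_\beta$ with the Weibull-type scaling $t_n s(t_n)\sim t_n\psi'(t_n)^{1/2}$ this is a direct consequence of the growth condition, and for $R_\infty$ it follows similarly from the representation of $\psi$. Once these uniformity issues are dispatched the result follows, and the structure of the argument is exactly that of Jensen's Theorem 6.2.1, adapted to the triangular-array setting where the row distribution $\bar\pi^{a_n}$ drifts with $n$.
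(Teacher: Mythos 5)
Your route differs from the paper's: instead of invoking Jensen's Theorem 6.2.1 / formula (2.2.6) as a black box (which is what the paper does, after first checking that $g$ is eventually convex so that Jensen's hypotheses hold), you re-derive the saddlepoint formula from scratch via the tilting identity $P(S_1^n\geq na_n)=\Phi(t_n)^n e^{-nt_na_n}\int_0^\infty e^{-\lambda_n w}\rho_n(w)\,dw$ and the paper's own triangular-array Edgeworth expansion (Theorem \ref{3theorem1}). This is a legitimate and arguably more self-contained strategy, since Theorem \ref{3theorem1} is already established for $h\in\mathfrak{R}$ and the change of variable $u=\lambda_n w$ with $\lambda_n=\sqrt{n}\,t_ns(t_n)$ is exactly the Laplace-type reduction underlying Jensen's formula; your splitting of the $u$-integral at $\log n$ correctly disposes of the tail at relative order $O(1/n)$.

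There is, however, a genuine quantitative gap at the central step. To conclude that $\rho_n(u/\lambda_n)=\phi(0)\big(1+o(1/\sqrt{n})\big)$ uniformly over the effective range $u\leq\log n$, you need $\phi(u/\lambda_n)/\phi(0)=e^{-u^2/(2\lambda_n^2)}=1+o(1/\sqrt{n})$, i.e.\ $(\log n)^2/\lambda_n^2=o(1/\sqrt{n})$. The fact you actually assert, namely $\lambda_n=\sqrt{n}\,t_ns(t_n)\to\infty$ (deduced from $t_n\to\infty$ and $\sqrt{n}\,s(t_n)\to\infty$), only yields a $(1+o(1))$ factor, which would degrade the final relative error from $o(1/\sqrt{n})$ to $o(1)$ and so would not prove the lemma as stated. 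What is needed, and what the paper spends most of its proof establishing, is the much stronger estimate $1/\lambda_n^2=o(1/n)$, equivalently $t_n^2s^2(t_n)\to\infty$. This does not follow from the growth condition (\ref{croissance de a}) by a one-line remark: it requires the case analysis over $\mathfrak{R}$, using $t_n\sim h(a_n)$ and $s^2(t_n)\sim\psi'(t_n)$ to get $t_n^2s^2(t_n)\sim a_nh(a_n)/(\beta+\epsilon(\psi(t_n)))$ when $h\in R_\beta$, and $t_n^2s^2(t_n)\sim t_n\psi(t_n)\epsilon(t_n)$ when $h\in R_\infty$, where the divergence in the second case rests specifically on condition (\ref{3section1030}). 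The same estimate is also what guarantees that the Edgeworth correction $\frac{\mu_3}{6\sqrt{n}s^3}\big(x^3-3x\big)$ evaluated at $x=u/\lambda_n$, which is of order $\frac{\mu_3}{\sqrt{n}s^3}\cdot\frac{u}{\lambda_n}$, integrates against $e^{-u}$ to something $o(1/\sqrt{n})$ with room to spare. You should therefore state and prove $t_n^2s^2(t_n)\to\infty$ explicitly for both classes; with that in place, your argument closes.
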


Denote $\Delta_{n}:=\left(  a_{n}-\epsilon_{n},a_{n}+\epsilon_{n}\right)  $
and $\Delta_{n}^{+}:=\left(  a_{n},a_{n}+\epsilon_{n}\right)  $. It holds%
\[
P\left(  \left.  S_{1}^{n}/n\in\Delta_{n}\right\vert S_{1}^{n}/n\geq
a_{n}\right)  \geq P\left(  \left.
{\displaystyle\bigcap\limits_{i=1}^{n}}
\left(  X_{i}\in\Delta_{n}\right)  \right\vert S_{1}^{n}/n\geq a_{n}\right)
\]
hence by Theorem \ref{thmLocalization}%
\[
P\left(  \left.  S_{1}^{n}/n\in\Delta_{n}^{+}\right\vert S_{1}^{n}/n\geq
a_{n}\right)  =P\left(  \left.  S_{1}^{n}/n\in\Delta_{n}\right\vert S_{1}%
^{n}/n\geq a_{n}\right)  \rightarrow1
\]
which yields%
\[
\frac{P\left(  S_{1}^{n}/n\in\Delta_{n}^{+}\right)  }{P\left(  S_{1}%
^{n}/n>a_{n}\right)  }\rightarrow1.
\]
Since $\left(  S_{1}^{n}/n\in\Delta_{n}^{+}\right)  \subset$ $\left(
S_{1}^{n}/n>a_{n}\right)  $ it follows that for any Borel set $B$ (depending
on $n$ or not)
\[
P\left(  \left.  X_{1}\in B\right\vert S_{1}^{n}/n>a_{n}\right)
=(1+o(1))P\left(  \left.  X_{1}\in B\right\vert S_{1}^{n}/n\in\Delta_{n}%
^{+}\right)  +C_{n}%
\]
where%
\[
0<C_{n}<\frac{P\left(  S_{1}^{n}/n\geq a_{n}+\epsilon_{n}\right)  }{P\left(
S_{1}^{n}/n\geq a_{n}\right)  }.
\]
Using Lemma \ref{JensenLemme} both in the numerator and the denominator, with
some control using (\ref{condition31}) and (\ref{example3}) proving that
(\ref{jensen}) holds with $a_{n}$ substituted by $a_{n}+\epsilon_{n}$, we get
$C_{n}\rightarrow0$.

\bigskip By Theorem \ref{thmLocalization}, with $B=\Delta_{n}$%
\[
P\left(  \left.  X_{1}\in\Delta_{n}\right\vert S_{1}^{n}/n\geq a_{n}\right)
\rightarrow1
\]
which in turn implies that
\[
P\left(  \left.  X_{1}\in\Delta_{n}\right\vert S_{1}^{n}/n\in\Delta_{n}%
^{+}\right)  \rightarrow1
\]
which proves that the conditional distribution of $X_{1}$ given $S_{1}%
^{n}/n\geq a_{n}$ concentrates on $\Delta_{n}$ , as does its distribution
given $S_{1}^{n}/n\in\Delta_{n}^{+}.$ Furthermore both distributions are
asymptotically equivalent in the sense that
\[
\frac{P\left(  \left.  X_{1}\in B_{n}\right\vert S_{1}^{n}/n\in\Delta_{n}%
^{+}\right)  }{P\left(  \left.  X_{1}\in B_{n}\right\vert S_{1}^{n}/n\geq
a_{n}\right)  }\rightarrow1
\]
for all sequence of Borel sets $B_{n}$ such that
\[
\lim\inf_{n\rightarrow\infty}P\left(  \left.  X_{1}\in B_{n}\right\vert
S_{1}^{n}/n\geq a_{n}\right)  >0.
\]
$.$

\bigskip We now prove Theorem \ref{thmExceedances}.

For a Borel set $B=B_{n}$ it holds%
\begin{align*}
P_{A_{n}}\left(  B\right)   &  =\int_{a_{n}}^{\infty}P_{v}(B)p\left(  \left.
S_{1}^{n}/n=v\right\vert S_{1}^{n}/n>a_{n}\right)  dv\\
&  =\frac{1}{P\left(  S_{1}^{n}/n>a_{n}\right)  }\int_{a_{n}}^{\infty}%
P_{v}(B)p\left(  S_{1}^{n}/n=v\right)  dv\\
&  =(1+o(1))\frac{1}{P\left(  S_{1}^{n}/n\in\Delta_{n}^{+}\right)  }%
\int_{a_{n}}^{\infty}P_{v}(B)p\left(  S_{1}^{n}/n=v\right)  dv\\
&  =(1+o(1))\frac{1}{P\left(  S_{1}^{n}/n\in\Delta_{n}^{+}\right)  }%
\int_{a_{n}}^{a_{n}+\epsilon_{n}}P_{v}(B)p\left(  S_{1}^{n}/n=v\right)  dv\\
&  +(1+o(1))\frac{1}{P\left(  S_{1}^{n}/n\in\Delta_{n}^{+}\right)  }%
\int_{a_{n}+\epsilon_{n}}^{\infty}P_{v}(B)p\left(  S_{1}^{n}/n=v\right)  dv\\
&  =(1+o(1))\int_{a_{n}}^{a_{n}+\epsilon_{n}}P_{v}(B)p\left(  \left.
S_{1}^{n}/n=v\right\vert S_{1}^{n}/n\in\Delta_{n}^{+}\right)  dv\\
&  +(1+o(1))\frac{1}{P\left(  S_{1}^{n}/n\in\Delta_{n}^{+}\right)  }%
\int_{a_{n}+\epsilon_{n}}^{\infty}P_{v}(B)p\left(  S_{1}^{n}/n=v\right)  dv\\
&  =(1+o(1))P_{a_{n}+\theta_{n}\epsilon_{n}}(B)\\
&  +(1+o(1))\frac{1}{P\left(  S_{1}^{n}/n\in\Delta_{n}^{+}\right)  }%
\int_{a_{n}+\epsilon_{n}}^{\infty}P_{v}(B)p\left(  S_{1}^{n}/n=v\right)  dv\\
&  =(1+o(1))P_{a_{n}+\theta_{n}\epsilon_{n}}(B)+C_{n}%
\end{align*}
for some $\theta_{n}$ in $\left(  0,1\right)  .$ The term $C_{n}$ is less than
$P(S_{1}^{n}/n>a_{n}+\epsilon_{n})/P(S_{1}^{n}/n>a_{n})$ which tends to $0$ as
$n$ tends to infinity, under\ (\ref{condition31}) and (\ref{example3}), using
Lemma \ref{JensenLemme}.

Hence when $B_{n}$ is such that $\lim\inf_{n\rightarrow\infty}P_{a_{n}%
+\theta_{n}\epsilon_{n}}(B_{n})>0$ using Theorem \ref{ThmcvVarTot}\ it holds,
since $a_{n}^{\prime}:=a_{n}+\theta_{n}\epsilon_{n}$ satisfies
(\ref{croissance de a})
\begin{equation}
P_{A_{n}}(B_{n})=\left(  1+o(1)\right)  G_{a_{n}+\theta_{n}\epsilon_{n}}%
(B_{n}). \label{DistP_A_nGA_n}%
\end{equation}
It remains to prove that
\begin{equation}
G_{a_{n}+\theta_{n}\epsilon_{n}}(B_{n})=\left(  1+o(1)\right)  G_{a_{n}}%
(B_{n}). \label{sol}%
\end{equation}

Make use of (\ref{approxgaussienneasympt}) to prove the claim. Define $t$
through $m(t)=a_{n}$ and $t^{\prime}$ through $m(t^{\prime})=a_{n}^{\prime}$.
It is enough to obtain%
\begin{equation}
s(t)=\left(  1+o(1)\right)  s(t^{\prime}) \label{s(t)eqs(t')}%
\end{equation}
as $n\rightarrow\infty.$ In the present case when $h$ belongs to $R_{k-1}$,
making use of Theorem \ref{order of s} it holds%
\[
s^{2}(t)=C\left(  1+o(1)\right)  m(t)^{2-k}%
\]
for some constant $C$; a similar formula holds for $s^{2}(t^{\prime})$ which
together yield (\ref{s(t)eqs(t')}), whatever $k>1$.

\bigskip This proves (\ref{sol}) and concludes the proof of Theorem
\ref{thmExceedances}.

\begin{rem}
By (\ref{approxgaussienneasympt}) and Theorem \ref{thmExceedances} the r.v.
$X_{1}$ conditioned upon $\left(  S_{1}^{n}/n\geq a_{n}\right)  $ has a
standard deviation of order $Ca_{n}^{2-k},$ much smaller than $\epsilon_{n}$
which by (\ref{example3}) is larger than $\sqrt{n\log a_{n}}/a_{n}^{\left(
k-2\right)  /2};$this proves that (\ref{PRINCIPEDEMO}) might be improved.
However the qualitative bound $k=2$ appears both in the DLP and in the EDP.
\end{rem}

\subsection{High level sets of functions}

We explore the consequences of Theorem \ref{thmExceedances} in connection with
(\ref{democracy}) in the context when the conditioning event writes
\[
A_{n}:=\frac{1}{n}\sum_{i=1}^{n}f(X_{i})\geq a_{n}%
\]
where the i.i.d. r.v's $X_{1},..,X_{n}$ belong to $\mathbb{R}^{d}$, and $f$ is
a real valued function such that the density $p_{f}$ of $f(X_{1})$ satisfies
all the hypotheses of Section \ref{SctNotationHyp}. We denote $P_{A_{n}}$ the
distribution of $\left(  X_{1},..,X_{n}\right)  $ given $A_{n}$. We assume
that $p^{f}$ satisfies (\ref{densityFunction}) with $h$ in $R_{k-1},$ $k>1.$

Using the DLP stated in Section \ref{EDPandLDP} it holds%

\[
P_{A_{n}}\left(
{\displaystyle\bigcap\limits_{i=1}^{n}}
\left(  f\left(  X_{i}\right)  \in\left(  a_{n}-\epsilon_{n},a_{n}%
+\epsilon_{n}\right)  \right)  \right)  \rightarrow1
\]
which implies that
\[
P_{A_{n}}\left(  f\left(  X_{1}\right)  \in\left(  a_{n}-\epsilon_{n}%
,a_{n}+\epsilon_{n}\right)  \right)  \rightarrow1.
\]
In turn, using Theorem \ref{thmExceedances}
\[
G_{a_{n}}\left(  f\left(  X_{1}\right)  \in\left(  a_{n}-\epsilon_{n}%
,a_{n}+\epsilon_{n}\right)  \right)  \rightarrow1
\]
where $G_{a_{n}}$ has density $g_{a_{n}}$ defined through%
\[
g_{a_{n}}(y):=\frac{e^{ty}}{\phi^{f}(t)}p^{f}(y)
\]
with $\phi^{f}(t):=Ee^{tf(X)}$and $m^{f}(t)=a_{n}.$ Note that following
(\ref{approxgaussienneasympt}), $G_{a_{n}}$ is nearly gaussian with
expectation $a_{n}$ and standard deviation $s^{f}(t):=\left(  d/dt\right)
m^{f}(t).$

Turning back to Theorem \ref{thmSousContrLin} it then holds

\begin{theo}
When the above hypotheses hold let $a_{n}$\ satisfies
\[
\lim\inf_{n\rightarrow\infty}\frac{\log g(a_{n})}{\log n}>\delta
\]
with $h$ in $R_{k-1},$ $k>1$ and
\[
g_{a_{n}}(x):=\frac{e^{tf(x)}}{\phi^{f}(t)}p(x)
\]
where $m^{f}(t)=a_{n}$ . Then there exists $\ $\ a sequence $\epsilon_{n}$
satisfying $\lim_{n\rightarrow\infty}\epsilon_{n}/n=0$ such that a r.v. $X$ in
$\mathbb{R}^{d}$ with density $g_{a_{n}}$ is a solution of the inequation
\[
a_{n}-\epsilon_{n}<f(x)<a_{n}+\epsilon_{n}%
\]
with probability $1$ as $n\rightarrow\infty.$ The sequence $\epsilon_{n}$
satisfies
\[
\lim_{n\rightarrow\infty}\frac{n\log a_{n}}{a_{n}^{k-2}\epsilon_{n}^{2}}=0.
\]

\end{theo}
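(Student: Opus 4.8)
The plan is to assemble the statement out of the two pillars already built in this section: the democratic localization principle (Theorem \ref{thmLocalization}) and the transfer result (Theorem \ref{thmExceedances}), the latter used in the form adapted to the $f$-conditioning, exactly as Theorem \ref{ThmLocalProba} was adapted into Theorem \ref{thmSousContrLin} by substituting $X_i$ with $f(X_i)$. Throughout we work under the conditioning event $A_n=\{\frac1n\sum_{i=1}^n f(X_i)\ge a_n\}$, where the density $p^f$ of $f(X_1)$ satisfies (\ref{densityFunction}) with $h\in R_{k-1}$, and we keep in force the growth condition (\ref{croissance de a}) written with $\psi$ attached to $p^f$, so that both the DLP and the EDP Gibbs principle apply; this is the regime described after Theorem \ref{thmLocalization}, e.g. $a_n=n^{\alpha}$ with $\alpha<2/(2+k)$.

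First I would fix the sequence $\epsilon_n$. By Theorem \ref{thmLocalization}, once (\ref{condition31}) holds one may choose $\epsilon_n$ obeying (\ref{example3}), that is $\lim_n \frac{n\log a_n}{a_n^{k-2}\epsilon_n^2}=0$, and moreover with $\epsilon_n/a_n\to 0$ when $1<k\le 2$ and $\epsilon_n\to 0$ when $k>2$. Since (\ref{croissance de a}) together with $h\in R_{k-1}$ and Theorem \ref{order of s} forces $a_n=o(n)$ — indeed $s^2(t)\sim C a_n^{2-k}$ turns (\ref{croissance de a}) into $a_n^{(2+k)/2}=o(\sqrt n)$ — in either case $\epsilon_n=o(n)$, which is the rate announced in the statement.

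Next, with this $\epsilon_n$, Theorem \ref{thmLocalization} gives $P_{A_n}\big(\bigcap_{i=1}^n\{a_n-\epsilon_n<f(X_i)<a_n+\epsilon_n\}\big)\to 1$, hence, the single-index event containing the intersection, $P_{A_n}\big(f(X_1)\in(a_n-\epsilon_n,a_n+\epsilon_n)\big)\to 1$. Set $B_n:=\{x\in\mathbb{R}^d:a_n-\epsilon_n<f(x)<a_n+\epsilon_n\}$, so that $P_{A_n}(B_n)\to 1$ and in particular $\liminf_n P_{A_n}(B_n)>0$. Theorem \ref{thmExceedances}, applied in its $f$-version (the relevant tilted law being $\Pi_f^{a_n}$, whose density is precisely the $g_{a_n}$ of the statement), then yields $P_{A_n}(B_n)=(1+o(1))G_{a_n}(B_n)$, whence $G_{a_n}(B_n)\to 1$. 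Unwinding the definition of $B_n$, this says exactly that a random vector $X$ in $\mathbb{R}^d$ with density $g_{a_n}$ satisfies $a_n-\epsilon_n<f(X)<a_n+\epsilon_n$ with probability tending to $1$, while $\epsilon_n=o(n)$; this is the claim.

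The routine verifications are relegated to the earlier results; the point that deserves care — and is the \textbf{main obstacle} — is confirming that Theorem \ref{thmExceedances} really does carry over verbatim to the $\mathbb{R}^d$-valued, $f$-composed setting. This amounts to checking that Lemma \ref{JensenLemme} (the sharp estimate of $P(S_1^n\ge na_n)$, used to kill the tail term $C_n$ in the proof of Theorem \ref{thmExceedances}) and the self-neglecting property (Lemma \ref{lemmaSselfneglecting}, invoked through (\ref{approxgaussienneasympt}) to replace $G_{a_n+\theta_n\epsilon_n}$ by $G_{a_n}$) continue to hold with $p,\phi,m,s$ replaced by $p^f,\phi_f,m_f,s_f$ — which they do, since $p^f$ was assumed to enjoy all the hypotheses of Section \ref{SctNotationHyp} — and that the substitution of $X_i$ by $f(X_i)$ is compatible with the exceedance conditioning, which is immediate because $A_n$ depends on the $X_i$ only through the values $f(X_i)$.
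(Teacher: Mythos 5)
Your proposal follows the paper's own route essentially verbatim: invoke the DLP (Theorem \ref{thmLocalization}) to obtain $P_{A_n}\bigl(f(X_1)\in(a_n-\epsilon_n,a_n+\epsilon_n)\bigr)\to 1$, then transfer to $G_{a_n}$ via Theorem \ref{thmExceedances} in its $f$-composed form (which the paper justifies by pointing back to Theorem \ref{thmSousContrLin}), and read off the claim. Your added check that $\epsilon_n=o(n)$ — derived from (\ref{croissance de a}) together with $s^2(t)\sim C\,a_n^{2-k}$ giving $a_n=o\bigl(n^{1/(2+k)}\bigr)$ — is a small, correct piece of bookkeeping the paper leaves implicit, and your flag about transporting Lemmas \ref{JensenLemme} and \ref{lemmaSselfneglecting} to $p^f$ is exactly the verification the paper delegates to the hypothesis that $p^f$ satisfies all of Section \ref{SctNotationHyp}.
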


\section{\bigskip Appendix}

\label{SectAppendix}

\subsection{Proof of Lemma \ref{lemmaSselfneglecting}}

\textbf{Case 1:} if $h\in R_{\beta}$. By Theorem $\ref{order of s}$, it holds
$s^{2}\sim\psi^{\prime}(t)$ with $\psi(t)\sim t^{1/\beta}l_{1}(t)$, where
$l_{1}$ is some slowly varying function. It also holds $\psi^{\prime
}(t)=1/h^{^{\prime}}\big(\psi(t)\big);$ therefore
\begin{align*}
\frac{1}{s^{2}}  &  \sim h^{^{\prime}}\big(\psi(t)\big)=\psi(t)^{\beta-1}%
l_{0}\big(\psi(t)\big)\big(\beta+\epsilon\big(\psi(t)\big)\big)\\
&  \sim\beta t^{1-1/\beta}l_{1}(t)^{\beta-1}l_{0}\big(\psi(t)\big)=o(t),
\end{align*}
which implies that for any $u\in K$
\[
\frac{u}{s}=o(t).
\]
It follows that%

\begin{align*}
\frac{s^{2}\left(  t+u/s\right)  }{s^{2}}  &  \sim\frac{\psi^{\prime}%
(t+u/s)}{\psi^{\prime}(t)}=\frac{\psi(t)^{\beta-1}l_{0}\big(\psi
(t)\big)\big(\beta+\epsilon\big(\psi(t)\big)\big)}{\big(\psi
(t+u/s)\big)^{\beta-1}l_{0}\big(\psi(t+u/s)\big)\big(\beta+\epsilon
\big(\psi(t+u/s)\big)\big)}\\
&  \sim\frac{\psi(t)^{\beta-1}}{\psi(t+u/s)^{\beta-1}}\sim\frac{t^{1-1/\beta
}l_{1}(t)^{\beta-1}}{(t+u/s)^{1-1/\beta}l_{1}(t+u/s)^{\beta-1}}\longrightarrow
1.
\end{align*}

\textbf{Case 2:} if $h\in R_{\infty}$. Then we have $\psi(t)\in\widetilde
{R_{0}};$ hence it holds
\[
\frac{1}{st}\sim\frac{1}{t\sqrt{\psi^{\prime}(t)}}=\sqrt{\frac{1}%
{t\psi(t)\epsilon(t)}}\longrightarrow0.
\]
Hence for any $u\in K$, we get as $n\rightarrow\infty$
\[
\frac{u}{s}=o(t),
\]
thus using the slowly varying propriety of $\psi(t)$ we have
\begin{align*}
\frac{s^{2}\left(  t+u/s\right)  }{s^{2}}  &  \sim\frac{\psi^{\prime}%
(t+u/s)}{\psi^{\prime}(t)}=\frac{\psi(t+u/s)\epsilon(t+u/s)}{t+u/s}\frac
{t}{\psi(t)\epsilon(t)}\\
&  \sim\frac{\epsilon(t+u/s)}{\epsilon(t)}=\frac{\epsilon(t)+O\big(\epsilon
^{\prime}(t)u/s\big)}{\epsilon(t)}\longrightarrow1,
\end{align*}
where we used a Taylor expansion in the second line. This completes the proof.

\bigskip

\subsection{\bigskip Proof of Lemma \ref{JensenLemme}}

For the density $p$ defined in $(\ref{densityFunction})$, we show that $g$ is
convex when $x$ is large. If $h\in R_{\beta}$, it holds for $x$ large
\[
g^{^{\prime\prime}}(x)=h^{^{\prime}}(x)=\frac{h(x)}{x}\big(\beta
+\epsilon(x)\big)>0.
\]
If $h\in R_{\infty}$, its reciprocal function $\psi(x)$ belongs to
$\widetilde{R_{0}}$. Set $x:=\psi(u)$; for $x$ large
\[
g^{^{\prime\prime}}(x)=h^{^{\prime}}(x)=\frac{1}{\psi^{^{\prime}}(u)}=\frac
{u}{\psi(u)\epsilon(u)}>0,
\]
where the inequality holds since $\epsilon(u)>0$ under condition
$(\ref{3section1030}),$ for large $u$. Therefore $g$ is convex for large $x$ .

Therefore, the density $p$ with $h\in\mathfrak{R}$ satisfies the conditions of
Jensen's Theorem 6.2.1 (\cite{Jensen}). A third order Edgeworth expansion in
formula $(2.2.6)$ of $(\cite{Jensen})$ yields
\begin{equation}
P(S_{1}^{n}\geq na_{n})=\frac{\Phi(t_{n})^{n}\exp(-nt_{n}a_{n})}{\sqrt{n}%
t_{n}s(t_{n})}\Big(B_{0}(\lambda_{n})+O\big(\frac{\mu_{3}(t_{n})}{6\sqrt
{n}s^{3}(t_{n})}B_{3}(\lambda_{n})\big)\Big), \label{E3}%
\end{equation}
where $\lambda_{n}=\sqrt{n}t_{n}s(t_{n})$, and $B_{0}(\lambda_{n})$ and
$B_{3}(\lambda_{n})$ are defined by
\[
B_{0}(\lambda_{n})=\frac{1}{\sqrt{2\pi}}\Big(1-\frac{1}{\lambda_{n}^{2}%
}+o(\frac{1}{\lambda_{n}^{2}})\Big),\qquad B_{3}(\lambda_{n})\sim-\frac
{3}{\sqrt{2\pi}\lambda_{n}}.
\]
We show that, as $a_{n}\rightarrow\infty,$
\begin{equation}
\frac{1}{\lambda_{n}^{2}}=o\left(  \frac{1}{n}\right)  . \label{E1}%
\end{equation}
Since $n/\lambda_{n}^{2}=1/(t_{n}^{2}s^{2}(t_{n}))$, $\left(  \ref{E1}\right)
$ is equivalent to show that
\begin{equation}
t_{n}^{2}s^{2}(t_{n})\longrightarrow\infty. \label{E2}%
\end{equation}
By Theorem $\ref{order of s}$, $m(t_{n})\sim\psi(t_{n})$ and $s^{2}(t_{n}%
)\sim\psi^{\prime}(t_{n});$ this entails that $t_{n}\sim h(a_{n})$.

If $h\in R_{\beta}$, notice that
\[
\psi^{\prime}(t_{n})=\frac{1}{h^{\prime}(\psi(t_{n}))}=\frac{\psi(t_{n}%
)}{h\big(\psi(t_{n})\big)\big(\beta+\epsilon(\psi(t_{n}))\big)}\sim\frac
{a_{n}}{h(a_{n})\big(\beta+\epsilon(\psi(t_{n}))\big)}.
\]
holds. Hence we have
\[
t_{n}^{2}s^{2}(t_{n})\sim h(a_{n})^{2}\frac{a_{n}}{h(a_{n})\big(\beta
+\epsilon(\psi(t_{n}))\big)}=\frac{a_{n}h(a_{n})}{\beta+\epsilon(\psi(t_{n}%
))}\longrightarrow\infty.
\]

If $h\in R_{\infty}$, then $\psi(t_{n})\in\widetilde{R_{0}}$, it follows that
\[
t_{n}^{2}s^{2}(t_{n})\sim t_{n}^{2}\frac{\psi(t_{n})\epsilon(t_{n})}{t_{n}%
}=t_{n}\psi(t_{n})\epsilon(t_{n})\longrightarrow\infty,
\]
where the last step holds from condition $(\ref{3section1030})$. We have shown
$(\ref{E1}).$ Therefore
\[
B_{0}(\lambda_{n})=\frac{1}{\sqrt{2\pi}}\Big(1+o\left(  \frac{1}{n}\right)
\Big).
\]

By $(\ref{E2})$, $\lambda_{n}$ goes to $\infty$ as $a_{n}\rightarrow\infty$,
which implies further that $B_{3}(\lambda_{n})\rightarrow0$. On the other
hand, by Theorem \ref{order of s} , $\mu_{3}/s^{3}\rightarrow0$. Hence we
obtain from $(\ref{E3})$
\[
P(S_{1}^{n}\geq na_{n})=\frac{\Phi(t_{n})^{n}\exp(-nt_{n}a_{n})}{\sqrt{2\pi
n}t_{n}s(t_{n})}\Big(1+o\left(  \frac{1}{\sqrt{n}}\right)  \Big),
\]
which together with $(\ref{3virg01000})$ proves the claim.

\end{document}